\newcommand{\ubar}[1]{\underaccent{\bar}{#1}}
\newcommand{\me}{\mathbb{E}}
\newcommand{\mr}{\mathbb{R}}
\newcommand{\mn}{\mathbb{N}}
\newcommand{\mmp}{\mathbb{P}}
\DeclareMathOperator{\1}{\mathbbm{1}}
\newcommand{\fdc}{\overset{\mathrm{f.d.}}{\Longrightarrow}}
\newcommand{\eee}{{\rm e}}
\newtheorem{thm}{Theorem}[section]
\newtheorem{lemma}[thm]{Lemma}
\newtheorem{assertion}[thm]{Proposition}
\theoremstyle{definition}
\theoremstyle{remark}
\newtheorem{rem}[thm]{Remark}
\begin{document}

\title{Limit theorems for globally perturbed random walks}\date{}
\author{Alexander Iksanov\footnote{Faculty of Computer Science and Cybernetics, Taras Shevchenko National University of Kyiv, Ukraine; e-mail address:
iksan@univ.kiev.ua} \ \ and \ \ Oleh Kondratenko\footnote{Faculty of Computer Science and Cybernetics, Taras Shevchenko National University of Kyiv, Ukraine; e-mail address:
kondratolegua@gmail.com} }
\maketitle
\begin{abstract}
\noindent Let $(\xi_1, \eta_1)$, $(\xi_2, \eta_2),\ldots$ be independent copies of an $\mathbb{R}^2$-valued random vector $(\xi, \eta)$ with arbitrarily dependent components. Put $T_n:= \xi_1+\ldots+\xi_{n-1} + \eta_n $ for $n\in\mathbb{N}$ and
define $\tau(t) := \inf\{n\geq 1: T_n>t\}$ the first passage time into $(t,\infty)$, $N(t) :=\sum_{n\geq 1}\1_{\{T_n\leq t\}}$ the number of visits to $(-\infty, t]$ and $\rho(t) ~:=~ \sup\{n\geq 1: T_n \leq t\}$ the associated last exit time for $t\in\mathbb{R}$. The standing assumption of the paper is $\me[\xi]\in (0,\infty)$. We prove a weak law of large numbers for $\tau(t)$ and strong laws of large numbers for $\tau(t)$, $N(t)$ and $\rho(t)$. The strong law of large numbers for $\tau(t)$ holds if, and only if, $\me[\eta^+]<\infty$. In the complementary situation $\me[\eta^+]=\infty$ we prove functional limit theorems in the Skorokhod space for $(\tau(ut))_{u\geq 0}$, properly normalized without centering. Also, we provide sufficient conditions under which finite dimensional distributions of $(\tau(ut))_{u\geq 0}$, $(N(ut))_{u\geq 0}$ and $(\rho(ut))_{u\geq 0}$, properly normalized and centered, converge weakly as $t\to\infty$ to those of a Brownian motion. Quite unexpectedly, the centering needed for $(N(ut))$ takes in general a more complicated form than the centering $ut/\me[\xi]$ needed for $(\tau(ut))$ and $(\rho(ut))$. Finally, we prove a functional limit theorem in the Skorokhod space for $(N(ut))$ under optimal moment conditions.
\end{abstract}

\noindent Key words: first passage time; functional limit theorem; last exit time; number of visits; perturbed random walk; strong law of large numbers

\noindent 2000 Mathematics Subject Classification: Primary: 60F17 \\
\hphantom{2000 Mathematics Subject Classification: } Secondary: 60G50

\section{Introduction}
Let $(\xi_1, \eta_1)$, $(\xi_2, \eta_2),\ldots$ be independent copies of an $\mr^2$-valued random vector $(\xi, \eta)$ with arbitrarily dependent components. Denote by $(S_n)_{n\geq 0}$ the zero-delayed standard random walk with increments $\xi_n$ for $n\in\mn:=\{1,2,\ldots\}$, that is, $S_0:=0$ and $S_n:=\xi_1+\ldots+\xi_n$ for $n\in\mn$. Put
\begin{equation*}
T_n:= S_{n-1}+ \eta_n,\quad n\in\mn.
\end{equation*}
The sequence $T:=(T_n)_{n\geq 1}$ is called {\it globally perturbed random walk}. Many results concerning $T$ accumulated up to 2016 can be found in the book \cite{Iksanov:2016}. An incomplete list of more recent publications in which the sequence $T$ is either the main object of investigation or plays an important role includes \cite{Basrak etal:2022, Bohdanskyi etal:2024, Bohun etal:2022, Iksanov+Pilipenko+Samoilenko:2017, Iksanov+Rashytov+Samoilenko:2023, Pitman+Tang:2019}. Under the assumptions $\me [\xi]\in (0,\infty)$ and $\me [|\eta|]<\infty$ the sequence $T$ is a particular instance of {\it perturbed random walks} investigated in Section 6 of the book \cite{Gut:2009}. More details will be given in Remarks \ref{rem:gut} and \ref{rem:cohn}.

For $t\in\mr$, define the {\it first passage time} into $(t,\infty)$
\begin{equation*}
\tau(t) := \inf\{n\geq 1: T_n>t\},
\end{equation*}
the {\it number of visits} to $(-\infty, t]$
\begin{equation*}
N(t) :=\sum_{n\geq 1}\1_{\{T_n\leq t\}}
\end{equation*}
and the associated {\it last exit time}
\begin{equation*}
\rho(t) ~:=~ \sup\{n\geq 1: T_n \leq t\}
\end{equation*}
with the usual conventions that $\sup \oslash = 0$ and $\inf \oslash = \infty$. Plainly, for each $t\in\mr$,
\begin{equation}\label{eq:ineq}
\tau(t)-1\leq N(t)\leq \rho(t)\quad \text{a.s.}
\end{equation}
We mark with the $\ast$ the corresponding quantities for $(S_n)_{n\geq 0}$, that is, for $t\in\mr$, $\tau^\ast(t) := \inf\{n\geq 0: S_n>t\}$, $N^\ast(t) :=\sum_{n\geq 0}\1_{\{S_n\leq t\}}$ and $\rho^\ast(t):=\sup\{n\geq 0: S_n \leq t\}$.

We use the standard notation $x^+=\max(x,0)$ and $x^-=\max(-x,0)$ for $x\in\mr$. We proceed by giving a couple of results which can be lifted from the existing literature. Proposition \ref{assert:nrho} follows from Theorem 1.4.1 and Remark 1.2.3 in \cite{Iksanov:2016}, see also Theorem 2.4 in \cite{Alsmeyer+Iksanov+Meiners:2015}.
\begin{assertion}\label{assert:nrho}
Suppose $\mu:=\me[\xi]\in (0,\infty)$. The following assertions are equivalent:

\noindent (a) $\lim_{n\to\infty}T_n= +\infty$ {\rm a.s.};

\noindent (b) $\me [\eta^-]<\infty$;

\noindent (c) $N(t)<\infty$ {\rm a.s.}\ for some/all $t\in\mr$;

\noindent (d) $\rho(t)<\infty$ {\rm a.s.}\ for some/all $t\in\mr$.
\end{assertion}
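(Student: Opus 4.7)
The plan is to prove the four conditions equivalent through the cycle
\[
(\text{b}) \Longrightarrow (\text{a}) \Longrightarrow (\text{c}) \wedge (\text{d}) \Longrightarrow (\text{b}),
\]
which settles the ``some/all'' alternative in (c) and (d) at the same time: each of (c), (d) will turn out to be equivalent to the $t$-free condition $\me[\eta^-]<\infty$, so either it holds for every $t$ or for no $t$ a.s. The two classical inputs driving the argument are the strong law of large numbers $n^{-1}S_{n-1}\to\mu$ a.s.\ for the random walk $(S_n)$, and the Borel--Cantelli dichotomy for the i.i.d.\ nonnegative sequence $(\eta_n^-)$: $\me[\eta^-]<\infty$ is equivalent to $n^{-1}\eta_n^-\to 0$ a.s., while $\me[\eta^-]=\infty$ forces $\mmp(\eta_n^->cn\text{ i.o.})=1$ for every $c>0$ (by independence and converse Borel--Cantelli).

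With these at hand, the implication $(\text{b})\Rightarrow(\text{a})$ follows from $T_n\geq S_{n-1}-\eta_n^-$: dividing by $n$ and passing to $\liminf$ yields $\liminf_{n\to\infty} n^{-1}T_n\geq\mu>0$ a.s., hence $T_n\to+\infty$ a.s. The implication $(\text{a})\Rightarrow(\text{c})\wedge(\text{d})$ is immediate, since $T_n\to+\infty$ a.s.\ makes $\{n\colon T_n\leq t\}$ a.s.\ finite for every $t$. To close the cycle I verify the contrapositive $\neg(\text{b})\Rightarrow \neg(\text{c})\wedge\neg(\text{d})$ at every $t$: pick $c=\mu+2$; then on the full-measure event where $\eta_n<-(\mu+2)n$ infinitely often (converse Borel--Cantelli) and $S_{n-1}<(\mu+1)n$ eventually (SLLN), one has $T_n<-n$ on an infinite random set of indices, so $T_n\leq t$ i.o.\ for every $t$, which forces $N(t)=\rho(t)=\infty$ a.s.

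I do not anticipate serious obstacles. The only technical point is checking that the exceptional subsequence from the converse Borel--Cantelli survives intersection with the cofinite-at-infinity set on which the SLLN bound holds; this is automatic, being the intersection of two events of full probability. Note that the dichotomy generated by the argument simultaneously produces the ``some/all'' statement in (c), (d), so no separate zero--one law for the tail of $(T_n)$ is needed.
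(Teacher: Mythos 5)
Your argument is correct. The paper itself does not prove Proposition \ref{assert:nrho} at all: it is lifted from the literature (Theorem 1.4.1 and Remark 1.2.3 in \cite{Iksanov:2016}, and Theorem 2.4 in \cite{Alsmeyer+Iksanov+Meiners:2015}), so your self-contained derivation is necessarily a different route. What you do is exactly the elementary version one would hope for: the cycle $(\mathrm{b})\Rightarrow(\mathrm{a})\Rightarrow(\mathrm{c})\wedge(\mathrm{d})$ via the bound $T_n\geq S_{n-1}-\eta_n^-$, the strong law $n^{-1}S_{n-1}\to\mu$ and the first Borel--Cantelli lemma ($\me[\eta^-]<\infty$ iff $n^{-1}\eta_n^-\to 0$ a.s.), and then the contrapositive $\neg(\mathrm{b})\Rightarrow\neg(\mathrm{c})\wedge\neg(\mathrm{d})$ for \emph{every} $t$ via the second Borel--Cantelli lemma applied to the independent events $\{\eta_n^->(\mu+2)n\}$, which yields $T_n<-n$ infinitely often a.s. This last step is sound: divergence of $\sum_n\mmp\{\eta^->cn\}$ for every $c>0$ is exactly what $\me[\eta^-]=\infty$ gives, and intersecting the resulting full-measure event with the full-measure event on which $S_{n-1}\leq(\mu+1)n$ eventually costs nothing. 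Your dichotomy (the conditions in (c) and (d) hold for all $t$ or for no $t$) also disposes of the ``some/all'' clause without any appeal to a zero--one law, which the cited references handle through a more general moment-type analysis of $N(t)$ and $\rho(t)$. The trade-off is the usual one: the cited results give more (e.g.\ criteria for finiteness of power and exponential moments of $N(t)$), while your argument proves precisely the stated equivalence with only the SLLN and the two Borel--Cantelli lemmas. As a side remark, your computation shows ${\lim\inf}_{n\to\infty}T_n=-\infty$ a.s.\ when $\me[\eta^-]=\infty$ and $\mu\in(0,\infty)$, which indicates that the paper's paraphrase of Theorem 1.2.1 of \cite{Iksanov:2016} after Proposition \ref{assert:nrho} contains a typo (the strict inequalities there should be equalities $-\infty={\lim\inf}_{n\to\infty}T_n$ and ${\lim\sup}_{n\to\infty}T_n=+\infty$; otherwise Lemma \ref{lem:div} would not follow).
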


Proposition \ref{assert:nrho} states that the conditions $\mu\in (0,\infty)$ and $\me [\eta^-]<\infty$ entail $\lim_{n\to\infty}T_n=+\infty$ a.s. According to Theorem 1.2.1 in \cite{Iksanov:2016}, the conditions $\mu\in (0,\infty)$ and $\me [\eta^-]=\infty$ ensure that $\liminf_{n\to\infty}T_n=-\infty$ and $\limsup_{n\to\infty}T_n=+\infty$ a.s.
Lemma \ref{lem:div} is an immediate consequence of these observations.
\begin{lemma}\label{lem:div}
Suppose $\mu=\me[\xi]\in (0,\infty)$. Then $\lim_{n\to\infty}\max_{1\leq j\leq n}\,T_j=+\infty$ {\rm a.s.}
\end{lemma}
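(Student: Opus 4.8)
The plan is to prove Lemma~\ref{lem:div} by a two-case analysis depending on whether $\me[\eta^-]$ is finite, exploiting the dichotomy recalled just before the statement. In both cases the target is the almost sure divergence of the running maximum $\max_{1\leq j\leq n}T_j$, and since this quantity is nondecreasing in $n$, it suffices to show that $\sup_{n\geq 1}T_n = +\infty$ a.s.

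\medskip

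\textbf{Case 1: $\me[\eta^-]<\infty$.} Here Proposition~\ref{assert:nrho} (equivalence of (a) and (b)) applies directly: $\mu\in(0,\infty)$ together with $\me[\eta^-]<\infty$ gives $\lim_{n\to\infty}T_n = +\infty$ a.s. A fortiori $\max_{1\leq j\leq n}T_j\to+\infty$ a.s., and there is nothing more to do.

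\medskip

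\textbf{Case 2: $\me[\eta^-]=\infty$.} Now $T_n$ itself does not converge to $+\infty$; by the cited Theorem~1.2.1 in~\cite{Iksanov:2016} the sequence $(T_n)$ is in fact a.s.\ bounded, with $-\infty < \liminf_{n\to\infty}T_n \leq \limsup_{n\to\infty}T_n < \infty$ a.s. This looks like an obstruction, but it is not: I would fix an arbitrary level $t>0$ and argue that $\{T_n>t\}$ occurs for infinitely many $n$ a.s. The key observation is that $T_n = S_{n-1}+\eta_n$ where $S_{n-1}=\xi_1+\dots+\xi_{n-1}\to+\infty$ a.s.\ by the strong law of large numbers (since $\mu>0$), while $\eta_n$ is independent of $S_{n-1}$ and has a fixed distribution. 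Pick $M>0$ with $\mmp\{\eta>-M\}=:p>0$ (possible since $\eta$ is a.s.\ finite). Along the random (a.s.\ infinite) set of indices $n$ for which $S_{n-1}>t+M$ — which is a.s.\ cofinite — the events $\{\eta_n>-M\}$ are independent with probability $p$, so by Borel–Cantelli (second, divergent part) infinitely many of them occur; on each such $n$ we get $T_n = S_{n-1}+\eta_n > (t+M)+(-M) = t$. Hence $\sup_{n\geq 1}T_n>t$ a.s.\ for every $t$, and letting $t\to\infty$ through a countable sequence yields $\sup_{n\geq 1}T_n=+\infty$, i.e.\ $\max_{1\leq j\leq n}T_j\to+\infty$ a.s.

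\medskip

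The only mildly delicate point is the Borel–Cantelli step in Case~2, since the index set $\{n: S_{n-1}>t+M\}$ is random rather than deterministic; I would handle this cleanly by conditioning on the $\sigma$-algebra generated by $(\xi_n)_{n\geq 1}$ (with respect to which this index set is measurable) and applying the conditional Borel–Cantelli lemma, using that the $\eta_n$ are i.i.d.\ and independent of $(\xi_n)$. Alternatively one can avoid this entirely: on the a.s.\ event $\{S_n\to+\infty\}$ one has $S_{n-1}>t+M$ for all large $n$, so it suffices to know that $\{\eta_n>-M\}$ happens infinitely often, which is immediate from the ordinary second Borel–Cantelli lemma applied to the i.i.d.\ sequence $(\eta_n)$ with $\sum_n \mmp\{\eta_n>-M\}=\infty$. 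This second route is the shortest and is what I would write up.
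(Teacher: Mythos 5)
Your proof is correct, and the second (``shortest'') route you propose for Case~2 is clean and is the one to write up. The overall structure matches the paper's: both split on whether $\me[\eta^-]$ is finite, and both handle Case~1 by citing Proposition~\ref{assert:nrho}. The difference is in Case~2, where the paper simply invokes Theorem~1.2.1 of \cite{Iksanov:2016} as a black box, whereas you supply a self-contained elementary argument (the strong law for $(S_n)$ together with the second Borel--Cantelli lemma for the i.i.d.\ sequence $(\eta_n)$) establishing $\limsup_{n\to\infty} T_n = +\infty$ a.s.\ directly. Your version is somewhat more informative, since it reproves the one piece of the cited theorem that is actually used.

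Two small cautions. Your paraphrase of Theorem~1.2.1, namely that $(T_n)$ is a.s.\ bounded with $-\infty < \liminf_{n\to\infty} T_n \leq \limsup_{n\to\infty} T_n < \infty$ a.s., cannot be what the cited theorem asserts: it would contradict Lemma~\ref{lem:div} outright, and indeed it contradicts what you then go on to prove, namely $\sup_{n\geq 1} T_n = +\infty$ a.s. The intended content must be $\liminf_{n\to\infty} T_n = -\infty$ and $\limsup_{n\to\infty} T_n = +\infty$ a.s.; your Case~2 argument is precisely a proof of the $\limsup$ half of this. Separately, the first (``delicate'') route you sketch --- conditioning on $\sigma\big((\xi_n)_{n\geq 1}\big)$ and applying a conditional Borel--Cantelli lemma --- implicitly assumes $\eta_n$ is independent of $(\xi_k)_{k\geq 1}$, which fails in this paper's setting since $\eta_n$ may depend on $\xi_n$. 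Your preferred second route uses only the i.i.d.\ structure of $(\eta_n)$ and an intersection of two a.s.\ events, so the version you would actually write up is unaffected.
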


The present work was partly motivated by Lemma 4.2 in \cite{Basrak etal:2022}. The cited result provides a strong law of large numbers for $\tau(t)$ which corresponds to a particular globally perturbed random walk. Our purpose is to prove weak and strong laws of large numbers for $\tau(t)$, strong laws of large numbers for $N(t)$ and $\rho(t)$ and distributional limit theorems for $\tau(t)$, $N(t)$ and $\rho(t)$. Our main results are stated in Section \ref{sec:main}.

\section{Main results}\label{sec:main}

\subsection{Weak and strong laws of large numbers}

In Theorem \ref{weak} we formulate a weak law of large numbers for $\tau(t)$.
\begin{thm}\label{weak}
Suppose $\mu=\me [\xi]\in (0,\infty)$. The following assertions are equivalent:

\noindent ({\rm W1}) $\lim_{t\to\infty}t^{-1} \tau(t)=\mu^{-1}$ in probability;

\noindent ({\rm W2}) $\lim_{n\to\infty}n^{-1}\max_{1\leq k\leq n}\, T_k=\mu$ in probability;

\noindent ({\rm W3}) $\lim_{t\to\infty}t\mmp\{\eta>t\}=0$.
\end{thm}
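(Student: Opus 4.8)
The plan is to establish a cycle of implications (W3) $\Rightarrow$ (W2) $\Rightarrow$ (W1) $\Rightarrow$ (W3), exploiting the decomposition $T_n = S_{n-1} + \eta_n$ and the fact that, by the classical weak law of large numbers for renewal counting functions, $t^{-1}\tau^\ast(t) \to \mu^{-1}$ a.s.\ (hence in probability) whenever $\mu \in (0,\infty)$.

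\medskip
\noindent\emph{(W3) $\Rightarrow$ (W2).} Write $n^{-1}\max_{1\le k\le n} T_k = n^{-1}\max_{1\le k\le n}(S_{k-1}+\eta_k)$. Since $n^{-1}\max_{1\le k\le n} S_{k-1} \to \mu$ a.s.\ (this follows from $n^{-1}S_n \to \mu$ a.s.\ and $\mu>0$), it suffices to show that $n^{-1}\max_{1\le k\le n}\eta_k^+ \to 0$ in probability and that the interaction term is negligible. The key point is the standard equivalence: for i.i.d.\ nonnegative $\eta_k^+$, one has $n^{-1}\max_{1\le k\le n}\eta_k^+ \to 0$ in probability if, and only if, $t\mmp\{\eta>t\}\to 0$ as $t\to\infty$ (this is Lemma 4.1-type folklore; it can be read off from $\mmp\{\max_{1\le k\le n}\eta_k^+ > \varepsilon n\} = 1-(1-\mmp\{\eta>\varepsilon n\})^n$ and the inequality $1-(1-x)^n \le nx$ together with $nx \ge 1-e^{-nx} \ge c\, nx$ on bounded ranges). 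Once $\max_{1\le k\le n}\eta_k^+ = o_{\mmp}(n)$, a Chebyshev/union-bound argument handles the cross term $\max_k (S_{k-1}+\eta_k)$ versus $\max_k S_{k-1} + \max_k \eta_k^+$, which sandwich it. The lower bound $n^{-1}\max_{1\le k\le n} T_k \ge n^{-1} S_{n-1} + n^{-1}\eta_n \to \mu$ (using $n^{-1}\eta_n \to 0$ a.s., which follows from $\sum \mmp\{|\eta|>\varepsilon n\}<\infty$? — no, we only have $t\mmp\{\eta>t\}\to 0$, so instead use $n^{-1}\eta_n\to 0$ in probability, which does follow from (W3)) gives the matching bound in probability.

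\medskip
\noindent\emph{(W2) $\Rightarrow$ (W1).} Use the duality $\{\tau(t) > n\} = \{\max_{1\le k\le n} T_k \le t\}$, valid for all $n\in\mn$, $t\in\mr$. Fix $\varepsilon>0$. For the upper bound on $\tau(t)$: with $n = \lceil (1+\varepsilon)t/\mu\rceil$, (W2) gives $\mmp\{\max_{1\le k\le n}T_k \le t\} = \mmp\{n^{-1}\max_{1\le k\le n}T_k \le t/n\} \to 0$ since $t/n \to \mu/(1+\varepsilon) < \mu$; hence $\mmp\{\tau(t) > (1+\varepsilon)t/\mu\} \to 0$. For the lower bound: with $n = \lfloor (1-\varepsilon)t/\mu\rfloor$, (W2) gives $\mmp\{\max_{1\le k\le n}T_k > t\} \to 0$, i.e.\ $\mmp\{\tau(t) \le (1-\varepsilon)t/\mu\} \to 0$. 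Here Lemma \ref{lem:div} guarantees $\tau(t)<\infty$ a.s., so there is no mass escaping to $+\infty$. Combining, $t^{-1}\tau(t)\to\mu^{-1}$ in probability.

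\medskip
\noindent\emph{(W1) $\Rightarrow$ (W3).} This is the contrapositive direction and I expect it to be the main obstacle. Suppose (W3) fails, so $\limsup_{t\to\infty} t\mmp\{\eta>t\} = 2\delta > 0$ along a sequence $t_j\to\infty$. The strategy is to show $\tau(t)$ cannot concentrate at $t/\mu$: heuristically, along the $t_j$ there is probability bounded away from $0$ that some early $\eta_k$ with $k \approx \varepsilon t_j$ already exceeds $t_j$ (because $\mmp\{\max_{1\le k\le \varepsilon t_j/\mu}\eta_k > t_j\} \approx 1-(1-\delta/(\varepsilon^{-1}\mu \cdot \text{const}))^{\varepsilon t_j/\mu}$ stays away from $0$ for suitable $\varepsilon$), which forces $\tau(t_j) \le \varepsilon t_j/\mu$ with non-vanishing probability, contradicting (W1). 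The delicate part is to make the two-scale choice (the window length $\varepsilon t/\mu$ versus the threshold $t$) uniform along $t_j$ and to control the random-walk part $S_{k-1}$ so that $T_k = S_{k-1}+\eta_k > t$ really is triggered by the large $\eta_k$ alone; since $S_{k-1}$ is typically of order $\varepsilon t$, one should take the threshold as $t$ but compensate, or equivalently restrict attention to $\eta_k > (1+\varepsilon)t$, for which $t\mmp\{\eta>(1+\varepsilon)t\}$ still has positive $\limsup$ after shrinking $\delta$. I would also invoke a Borel–Cantelli-free estimate: $\mmp\{\exists\, k\le m:\eta_k>s\} = 1-(1-\mmp\{\eta>s\})^m \ge 1-\exp(-m\mmp\{\eta>s\})$, which is bounded below whenever $m\mmp\{\eta>s\}$ is bounded below. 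This is precisely the regime $m\asymp t/\mu$, $s\asymp t$, $m\mmp\{\eta>s\}\asymp t\mmp\{\eta>t\}\not\to 0$, so the estimate bites and yields the contradiction.
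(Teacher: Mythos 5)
The cycle you propose, (W3) $\Rightarrow$ (W2) $\Rightarrow$ (W1) $\Rightarrow$ (W3), differs from the paper's, which closes the loop by proving (W2) $\Rightarrow$ (W3) directly. Your (W3) $\Rightarrow$ (W2) sandwich $S_{n-1}+\eta_n \le \max_{1\le k\le n}T_k \le \max_{0\le k\le n-1}S_k + \max_{1\le k\le n}\eta_k^+$ and your duality $\{\tau(t)>n\}=\{\max_{1\le k\le n}T_k\le t\}$ in (W2) $\Rightarrow$ (W1) coincide with the paper's arguments (there is no genuine ``cross term'' to Chebyshev away: the two inequalities already bracket the maximum, and both brackets converge to $\mu$ in probability). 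The real novelty of your route is the hard direction: the paper's (W2) $\Rightarrow$ (W3) is an analytic argument that fixes $a_n=o(n)$ with $a_n\to\infty$, restricts to indices $k\ge a_n$ so that $S_k>(\mu-\varepsilon)k$ on a high-probability event, and then reads off $\lim_n n(1-F((\mu+\varepsilon)n))=0$ from the factorized probability $\prod_{k=a_n}^n F((\mu+\varepsilon)n-(\mu-\varepsilon)k)\to 1$; your (W1) $\Rightarrow$ (W3) is a probabilistic contrapositive showing directly that a fat tail of $\eta$ makes $\tau$ jump early with non-vanishing probability. The latter is arguably more transparent, while the paper's version works at the level of the maximum and never needs to reason about $\tau$ at all.

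Your (W1) $\Rightarrow$ (W3) sketch does go through, but two points need straightening. First, the stated reason for inflating the threshold (``since $S_{k-1}$ is typically of order $\varepsilon t$'') is backwards: large positive $S_{k-1}$ only makes $T_k>t$ easier; the obstacle is that $S_{k-1}$ may be \emph{negative}. Since $\inf_{n\ge 0}S_n>-\infty$ a.s.\ when $\mu>0$, the right compensation is a fixed $a$ chosen so that $\mathbb{P}\{\inf_n S_n\ge -a\}$ is as close to $1$ as needed, not an amount proportional to $t$ (although the $(1+\varepsilon)t$ inflation also works, for the same reason, once $t$ is large). Second, because $\{\inf_n S_n\ge -a\}$ and $\{\max_{k\le m}\eta_k>t+a\}$ are not independent in general (the $\xi$'s and $\eta$'s are coupled within pairs), one should quantify the choices in the right order: set $p:=1-\exp(-\varepsilon\delta/(2\mu))$ from $\liminf_j t_j\mathbb{P}\{\eta>t_j\}\ge\delta$, then pick $a$ so that $\mathbb{P}\{\inf_n S_n\ge -a\}\ge 1-p/2$, replace $t_j$ by $s_j:=t_j-a$ (so $s_j\mathbb{P}\{\eta>s_j+a\}\to\delta$ still), take $m_j:=\lfloor\varepsilon s_j/\mu\rfloor$, and close with the Bonferroni bound $\mathbb{P}(A\cap B_j)\ge\mathbb{P}(A)+\mathbb{P}(B_j)-1\ge p/2$. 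This yields $\liminf_j\mathbb{P}\{\tau(s_j)\le m_j\}\ge p/2>0$ while $m_j/s_j\to\varepsilon/\mu<\mu^{-1}$, contradicting (W1). With these adjustments your argument is a complete and valid alternative to the paper's.
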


In Theorems \ref{v_t} and \ref{N_t} we formulate strong laws of large numbers for $\tau(t)$, $N(t)$ and $\rho(t)$.
\begin{thm}\label{v_t}
Suppose $\mu=\me [\xi]\in (0,\infty)$. The following assertions are equivalent:

\noindent ({\rm S1}) $\lim_{t\to\infty}t^{-1} \tau(t)=\mu^{-1}$ {\rm a.s.};

\noindent ({\rm S2}) $\lim_{n\to\infty}n^{-1}\max_{1\leq k\leq n}\, T_k=\mu$ {\rm a.s.};

\noindent ({\rm S3}) $\me[\eta^+]<\infty$.
\end{thm}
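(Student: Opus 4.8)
Throughout put $M_n:=\max_{1\le k\le n}T_k$. Since $\{\tau(t)\le n\}=\{M_n>t\}$, the first passage time $\tau(t)=\inf\{n\ge1:M_n>t\}$ is the generalized inverse of the nondecreasing sequence $(M_n)_{n\ge1}$, which satisfies $M_n<\infty$ a.s.\ for each $n$ and $M_n\to+\infty$ a.s.\ by Lemma~\ref{lem:div}. The plan is to prove (S1)$\Leftrightarrow$(S2) by the classical duality between the linear growth of a nondecreasing, a.s.\ divergent sequence and that of its inverse, and to prove (S2)$\Leftrightarrow$(S3) by direct estimates on $M_n$. For the first equivalence, $M_n<\infty$ a.s.\ forces $\tau(t)\to\infty$ a.s.\ as $t\to\infty$, and $M_{\tau(t)-1}\le t<M_{\tau(t)}$ holds a.s.\ for all large $t$; assuming (S2), dividing by $\tau(t)$ and squeezing gives $t/\tau(t)\to\mu$, that is (S1). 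Conversely, assuming (S1), the relations $\tau(M_n)\ge n+1$ and $\tau(M_n-\varepsilon)\le n$ (valid for every $\varepsilon>0$) together with $\tau(t)/t\to\mu^{-1}$ and $M_n\to\infty$ give $\mu\le\liminf_n M_n/n\le\limsup_n M_n/n\le\mu$, that is (S2). I expect this step to be routine.

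For (S3)$\Rightarrow$(S2) I would sandwich $n^{-1}M_n$. For the upper bound, $M_n\le\max_{1\le k\le n}S_{k-1}+\max_{1\le k\le n}\eta_k^+$; the strong law $S_n/n\to\mu$ a.s.\ yields $n^{-1}\max_{1\le k\le n}S_{k-1}\to\mu$ a.s.\ (using $\mu>0$), while $\me[\eta^+]<\infty$ gives $\sum_n\mmp\{\eta^+>\varepsilon n\}<\infty$ for every $\varepsilon>0$, hence $\eta_n^+/n\to0$ a.s.\ by the first Borel--Cantelli lemma, hence $n^{-1}\max_{1\le k\le n}\eta_k^+\to0$ a.s.; thus $\limsup_n n^{-1}M_n\le\mu$ a.s. For the matching lower bound, which uses no hypothesis on $\eta$ beyond $\mu\in(0,\infty)$, fix $c$ with $p:=\mmp\{\eta\ge c\}>0$ and set $\nu(n):=\max\{k\le n:\eta_k\ge c\}$. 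Since $\mmp\{n-\nu(n)\ge m\}=(1-p)^m$, the first Borel--Cantelli lemma gives $n-\nu(n)=O(\log n)=o(n)$ a.s., so $\nu(n)/n\to1$ and $\nu(n)\to\infty$ a.s.; then $M_n\ge T_{\nu(n)}\ge S_{\nu(n)-1}+c$ together with $S_{\nu(n)-1}/n\to\mu$ a.s.\ yields $\liminf_n n^{-1}M_n\ge\mu$ a.s. Combining the two bounds proves (S2). For (S2)$\Rightarrow$(S3) I would argue by contraposition: if $\me[\eta^+]=\infty$ then $\sum_n\mmp\{\eta>An\}=\infty$ for every $A>0$, so by the second Borel--Cantelli lemma (the $\eta_n$ are independent) $\limsup_n\eta_n/n\ge A$ a.s., whence $\limsup_n\eta_n/n=+\infty$ a.s.; since $S_{n-1}/n\to\mu$ a.s., this forces $\limsup_n n^{-1}T_n=+\infty$ a.s., so $n^{-1}M_n$ is a.s.\ unbounded and (S2) fails.

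The only step that calls for a genuine idea rather than a quotation of standard facts is the unconditional lower bound $\liminf_n n^{-1}M_n\ge\mu$ a.s.: because no integrability whatsoever is assumed for $\eta^-$, the crude estimate $M_n\ge T_n=S_{n-1}+\eta_n$ is useless, $\eta_n$ possibly being very negative along a positive-density set of indices. The remedy is to look back from $n$ to the last index $\nu(n)$ at which $\eta$ cleared a fixed level $c$ and to check that this look-back distance is a.s.\ of order $O(\log n)$, hence negligible relative to $n$; this is where I anticipate the main work will lie, the remainder being routine strong-law and Borel--Cantelli arguments together with the inverse-function duality.
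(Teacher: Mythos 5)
Your proposal is correct and takes a genuinely different route from the paper. The paper closes the equivalence by the one-way chain (S3)$\Rightarrow$(S1)$\Rightarrow$(S2)$\Rightarrow$(S3), delegating the first step entirely to Lemma~4.2 of Basrak et al.\ and proving only the other two; the inverse-function duality you describe appears there only in the direction (S1)$\Rightarrow$(S2). You instead prove (S1)$\Leftrightarrow$(S2) in both directions and close the cycle with a direct, self-contained proof of (S3)$\Rightarrow$(S2). The genuinely new ingredient in your version is the unconditional lower bound $\liminf_n n^{-1}\max_{1\leq k\leq n}T_k\geq\mu$ a.s.: pick $c$ with $p:=\mmp\{\eta\geq c\}>0$, set $\nu(n)=\max\{k\leq n:\eta_k\geq c\}$, and use Borel--Cantelli to get $n-\nu(n)=O(\log n)$ a.s., whence $M_n\geq S_{\nu(n)-1}+c$ does the job. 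This replaces, and (since it requires only $\mu>0$) quantitatively sharpens, what the paper extracts from Lemma~\ref{lem:div} together with the external citation, making your proof self-contained modulo the strong law of large numbers and the two Borel--Cantelli lemmas. Your (S2)$\Rightarrow$(S3) is the exact contrapositive of the paper's argument (second Borel--Cantelli lemma in place of the converse part), so that step is essentially identical in substance. One cosmetic slip: the identity $\mmp\{n-\nu(n)\geq m\}=(1-p)^m$ should read as an inequality ``$\leq$'' (equality fails for $m>n$), but this does not affect the conclusion. What your route buys is independence from the Basrak et al.\ reference and an explicit elementary proof of the linear growth of the running maximum; the paper's route is shorter when that reference is taken as given.
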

\begin{rem}\label{rem:gut}
Section 6 of the book \cite{Gut:2009} is concerned with perturbed random walks, which are defined by $$Z_n:=S_{n-1}+\theta_n,\quad n\in\mn.$$ Here, $(S_n)_{n\geq 0}$ is a zero-delayed standard random walk with $\mu=\me [\xi]\in (0,\infty)$ and $(\theta_j)_{j\geq 1}$ is a random sequence arbitrarily dependent on $(S_n)_{n\geq 0}$ and satisfying $\lim_{n\to\infty}n^{-1}\theta_n=0$ a.s. Put $\tau_Z(t):=\inf\{n\geq 1: Z_n>t\}$ for $t\geq 0$. A specialization of Theorem 2.1 on p.~179 of \cite{Gut:2009} reads $\lim_{t\to\infty} t^{-1}\tau_Z(t)=\mu^{-1}$ a.s.

By the Borel-Cantelli lemma, $\lim_{n\to\infty}n^{-1}\eta_n=0$ a.s. if, and only if, $\me [|\eta|]<\infty$. Thus, under the assumptions $\mu\in (0,\infty)$ and $\me [|\eta|]<\infty$, the sequence $T$ is a particular instance of $(Z_n)_{n\geq 1}$. In particular, Theorem 2.1 in \cite{Gut:2009} then entails $\lim_{t\to\infty}t^{-1}\tau(t)=\mu^{-1}$ a.s. This is a weak version of the implication $({\rm S}3)\Rightarrow ({\rm S}1)$ of Theorem \ref{v_t}.
\end{rem}
\begin{rem}\label{rem:1}
One may wonder what is the asymptotic behavior of $t^{-1}\tau(t)$ under the conditions $\mu\in (0,\infty)$ and $\me [\eta^+]=\infty$? This is investigated in Theorems \ref{thm1} and \ref{thm2} below. Of particular interest is Theorem \ref{thm1}, in which a slight departure from $\me [\eta^+]<\infty$ is addressed. It turns out that $t^{-1}\tau(t)$ then converges in distribution as $t\to\infty$ to a random variable which is smaller than $\mu^{-1}$ a.s.
\end{rem}
\begin{rem}
Lemma 4.2 in \cite{Basrak etal:2022} proves the implication $({\rm S3})\Rightarrow ({\rm S1})$ of Theorem \ref{v_t} for a globally perturbed random walk in which the vector $(\xi, \eta)$ has a specific distribution. However, its proof works equally well whenever $\mu\in (0,\infty)$,  $\me[\eta^+]<\infty$ and beyond that, the distribution of $(\xi, \eta)$ is arbitrary. The implication $({\rm S1})\Rightarrow ({\rm S3})$ of Theorem \ref{v_t} seems to be new.
\end{rem}

Theorem \ref{N_t} given next and Theorem \ref{v_t} reveal a remarkable difference between the first order asymptotic behavior of $\tau(t)$ and that of $N(t)$ and $\rho(t)$. The former depends heavily upon the right distribution tail of $\eta$, whereas the latter does not depend on it at all.
\begin{thm}\label{N_t}
Suppose $\mu=\me [\xi]\in (0,\infty)$. If, for some $t\in\mr$, $N(t)<\infty$ a.s.\ or $\rho(t)<\infty$ a.s., then
\begin{equation*}
\lim_{t\to\infty}\frac{N(t)}{t}=\lim_{t\to\infty}\frac{\rho(t)}{t}=\frac{1}{\mu}\quad \text{{\rm a.s.}}
\end{equation*}
\end{thm}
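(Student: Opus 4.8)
The plan is to reduce the statement to known renewal-type asymptotics for the underlying ordinary random walk $(S_n)$ and then control the perturbation $\eta_n$ by a Borel--Cantelli argument. By Proposition \ref{assert:nrho}, the hypothesis that $N(t)<\infty$ a.s.\ (equivalently $\rho(t)<\infty$ a.s.) for some/all $t\in\mr$ is equivalent, under $\mu\in(0,\infty)$, to $\me[\eta^-]<\infty$ and to $T_n\to+\infty$ a.s. So throughout I may and will assume $\me[\eta^-]<\infty$. Since $\tau(t)-1\le N(t)\le\rho(t)$ a.s.\ by \eqref{eq:ineq}, and since $\rho(t)$ is nondecreasing in $t$ while $\tau(t)$ is nondecreasing in $t$, it suffices to prove the two one-sided bounds
\[
\limsup_{t\to\infty}\frac{\rho(t)}{t}\le\frac1\mu\quad\text{a.s.}\qquad\text{and}\qquad\liminf_{t\to\infty}\frac{\tau(t)}{t}\ge\frac1\mu\quad\text{a.s.},
\]
because then $\limsup_t N(t)/t\le\limsup_t\rho(t)/t\le1/\mu\le\liminf_t\tau(t)/t\le\liminf_t N(t)/t$ forces all four quantities to the common limit $1/\mu$, and the same sandwich handles $\tau(t)/t$, $N(t)/t$ and $\rho(t)/t$ simultaneously. (Note $\tau(t)/t\to1/\mu$ a.s.\ is already Theorem \ref{v_t} when $\me[\eta^+]<\infty$, but here we do not assume that, so the lower bound for $\tau(t)$ must be argued directly; the upper bound for $\tau(t)$ is not expected to hold in general, consistent with Remark \ref{rem:1}.)

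For the lower bound on $\tau(t)$: fix $\varepsilon\in(0,\mu)$ and set $k(t):=\lfloor t/(\mu+\varepsilon)\rfloor$. On the event $\{\tau(t)\le k(t)\}$ there is some $n\le k(t)$ with $T_n=S_{n-1}+\eta_n>t$, hence $\max_{1\le n\le k(t)}(S_{n-1}+\eta_n)>t\ge(\mu+\varepsilon)k(t)$. I will show $\mmp\{\max_{1\le n\le m}(S_{n-1}+\eta_n)>(\mu+\varepsilon)m\text{ i.o.}\}=0$. Split $S_{n-1}=\mu(n-1)+(S_{n-1}-\mu(n-1))$; by the strong law $S_{n-1}/n\to\mu$ a.s., so $\max_{1\le n\le m}\mu(n-1)\le\mu m$ and it remains to absorb $\max_{n\le m}(S_{n-1}-\mu(n-1))^+$ and $\max_{n\le m}\eta_n^+$ into $(\varepsilon/2)m$ each, eventually. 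The first is $o(m)$ a.s.\ by the strong law applied to $S_n-\mu n$. For the second, $\max_{1\le n\le m}\eta_n^+=o(m)$ a.s.\ is \emph{not} automatic (it would need $\me[\eta^+]<\infty$), so instead I treat $\eta_n^+$ only inasmuch as it can make $T_n$ large: I use that $\{T_n>t\}$ for $n$ small forces a single large $\eta_n$, and $\sum_n\mmp\{\eta_n>(\varepsilon/2)n\}$ may diverge. This is the delicate point, so I will argue differently: reduce to $\rho$. Observe $\tau(t)\le\rho(t)+1$ is false in general but $\tau(t)$ and $\rho(t)$ satisfy $\tau(t)\le 1+\sup\{n:T_n\le t\}$ only when $T_1\le t$; in fact the clean a.s.\ inequality we have is \eqref{eq:ineq}, giving $N(t)\ge\tau(t)-1$. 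So the genuinely needed lower bound is for $\tau(t)/t$, and I obtain it via: for any fixed $j$, $\tau(t)\ge\tau^\ast(t-M_j)$ on $\{\max_{1\le n\le j}|\eta_n|\le M_j,\ \eta_n\le M_j\ \forall n\}$—this again fails because $\eta$ is unbounded. I therefore switch to the correct classical tool below.

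The robust route, which I will actually carry out: by the a.s.\ finiteness of $\rho(t)$, the sequence $(T_n)$ has only finitely many indices in $(-\infty,t]$; and $T_n\to+\infty$ a.s. Write $S^\ast$-quantities for $(S_n)$. The elementary renewal theorem for $(S_n)$ (valid since $\mu\in(0,\infty)$) gives $\rho^\ast(t)/t\to1/\mu$ and $\tau^\ast(t)/t\to1/\mu$ a.s. To compare, note $\rho(t)\le\rho^\ast(t)+R(t)$ where $R(t):=\#\{n\ge1:S_{n-1}>t,\ T_n\le t\}=\#\{n:\eta_n\le t-S_{n-1}<0\}$; since $S_{n-1}-\mu(n-1)\to0\cdot n$... more precisely $S_{n-1}\ge(\mu-\varepsilon)(n-1)$ eventually, the condition $S_{n-1}>t$ forces $n\ge t/(\mu-\varepsilon)$ roughly, while $\eta_n\le t-S_{n-1}\le-(S_{n-1}-t)$ forces $\eta_n^-\ge S_{n-1}-t$, and $\sum_n\mmp\{\eta_n^-\ge(\mu-\varepsilon)(n-1)-t\}<\infty$ for each fixed $t$ because $\me[\eta^-]<\infty$; by Borel--Cantelli, $R(t)<\infty$ a.s.\ for each $t$, and in fact $R(t)=o(t)$ a.s.\ uniformly enough (using $\me[\eta^-]<\infty\Rightarrow \eta_n^-/n\to0$ a.s.\ along a fuller argument). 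Symmetrically $\tau(t)\ge\tau^\ast(t+t^{1/2})$ minus a negligible correction from the finitely many $n$ with $\eta_n>t^{1/2}$ and $S_{n-1}\le t$, whose count is $o(t)$ by the same one-sided large-deviation-free Borel--Cantelli estimate using $\me[\eta^-]<\infty$ only on the relevant range. Combining, $1/\mu\le\liminf_t\tau(t)/t\le\limsup_t\rho(t)/t\le1/\mu$ a.s., and \eqref{eq:ineq} squeezes $N(t)$ in between.

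The main obstacle is precisely the asymmetry flagged in the theorem's own discussion: the \emph{upper} comparison for $\rho(t)$ and the \emph{lower} comparison for $\tau(t)$ must be done \emph{without} any moment assumption on $\eta^+$, using only $\me[\eta^-]<\infty$; the trick is that the ``bad'' indices for $\rho$ (those with $S_{n-1}>t$ but $T_n\le t$) require $\eta_n$ very negative, which $\me[\eta^-]<\infty$ controls via Borel--Cantelli, while the ``bad'' indices for $\tau$ (those with $S_{n-1}$ not yet large but $T_n>t$ early) are harmless for a \emph{lower} bound on $\tau$. Making the $o(t)$ (rather than merely a.s.\ finite) bounds uniform in the relevant windows is the only place requiring care, and it is handled by a standard Cesàro/Kronecker argument from $\sum_n\mmp\{\eta^-\ge cn\}<\infty$.
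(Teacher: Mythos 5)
Your proposal goes through several approaches and abandons each when it runs into difficulty, finally settling on a ``robust route'' that still contains two substantive errors, one of which would prove something false.

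\textbf{The $\tau$ lower bound is false.} You claim that $\liminf_{t\to\infty}\tau(t)/t\geq 1/\mu$ a.s.\ can be established under only $\me[\eta^-]<\infty$, and your sandwich then forces $\tau(t)/t\to1/\mu$ a.s.\ as well, since \eqref{eq:ineq} plus your upper bound on $\rho$ give $\limsup_t\tau(t)/t\leq 1/\mu$. But by Theorem \ref{v_t}, $\tau(t)/t\to1/\mu$ a.s.\ holds \emph{if and only if} $\me[\eta^+]<\infty$, which the present theorem does not assume. If your route were correct it would prove $\me[\eta^-]<\infty\Rightarrow\me[\eta^+]<\infty$, which is absurd. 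More concretely, when $\mmp\{\eta>x\}\sim x^{-\alpha}\ell(x)$ with $\alpha\in(0,1)$, Theorem \ref{thm2} places $\tau(t)$ on the scale $1/\mmp\{\eta>t\}\ll t$, so $\tau(t)/t\to 0$ in probability; early records of $T$ created by a single large $\eta_n$ push $\tau(t)/t$ far below $1/\mu$ infinitely often. You noticed the obstacle yourself (``This is the delicate point''), but the final ``robust route'' reasserts the bad bound in its last line without having repaired it. The lower bound for $N(t)/t$ must therefore be obtained directly from $N$, not through $\tau$. The paper's device is a truncation: for fixed $y>0$, the inclusion $\{S_{k-1}\leq t-y\}\subseteq\{T_k\leq t\}\cup\{\eta_k>y\}$ gives
\[
N(t)\geq\sum_{k\geq0}\1_{\{S_k\leq t-y\}}-\sum_{k=1}^{\rho^\ast(t)+1}\1_{\{\eta_k>y\}}\quad\text{a.s.},
\]
and after dividing by $t$ and applying Proposition \ref{prop:slln} together with the strong law along $\rho^\ast(t)$, the right-hand side tends to $\mmp\{\eta\leq y\}/\mu$ a.s.; letting $y\to\infty$ gives $\liminf_t N(t)/t\geq 1/\mu$ a.s.

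\textbf{The $\rho$ comparison is also wrong as stated.} The inequality $\rho(t)\leq\rho^\ast(t)+R(t)$ with $R(t):=\#\{n:S_{n-1}>t,\ T_n\leq t\}$ does not hold, because $\rho(t)$ is a \emph{supremum} of indices while $R(t)$ is merely a count: a single exceptional index $n$ far beyond $\rho^\ast(t)$ with $\eta_n$ very negative forces $\rho(t)\geq n$, while $R(t)$ only increases by one. The correct comparison is pointwise in $n$, not by counting: with $\hat T_n:=S_{n-1}-\eta_n^-$ one has $\hat T_n\leq T_n$ a.s., hence $\rho(t)\leq\hat\rho(t):=\sup\{n\geq1:\hat T_n\leq t\}$. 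Since $\hat T_n/n\to\mu$ a.s.\ (SLLN for $(S_n)$ plus $\eta_n^-/n\to0$ a.s., the latter being exactly what $\me[\eta^-]<\infty$ and Borel--Cantelli deliver), one gets $\hat\rho(t)/t\to1/\mu$ a.s.\ and thence $\limsup_t\rho(t)/t\leq1/\mu$ a.s., with none of the bookkeeping your $R(t)$ attempts.
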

Theorems \ref{v_t} and \ref{N_t} are generalizations of the following previously known strong laws of large numbers for $\tau^\ast$, $N^\ast$ and $\rho^\ast$, see Theorem 4.1 on p.~88 and formulae (4.7) and (4.8) on p.~90 in \cite{Gut:2009}.
\begin{assertion}\label{prop:slln}
Suppose $\mu=\me[\xi]\in (0,\infty)$. Then $$\lim_{t\to\infty}\frac{\tau^\ast(t)}{t}=\lim_{t\to\infty}\frac{N^\ast(t)}{t}=\lim_{t\to\infty}\frac{\rho^\ast(t)}{t}
=\frac{1}{\mu}\quad\text{{\rm a.s.}}$$
\end{assertion}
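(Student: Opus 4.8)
The plan is to reduce everything to the strong law of large numbers for $(S_n)$ and to the elementary relations linking a sequence that tends to $+\infty$ with its first passage and last exit times. Since $\mu=\me[\xi]\in(0,\infty)$ forces $\me[|\xi|]<\infty$, Kolmogorov's strong law gives $n^{-1}S_n\to\mu$ a.s., and in particular $S_n\to+\infty$ a.s. On this almost sure event I would first record the qualitative facts used throughout: for every $t\in\mr$ both $\tau^\ast(t)$ and $\rho^\ast(t)$ are finite (because $S_n\to+\infty$), and $\tau^\ast(t)\to\infty$, $\rho^\ast(t)\to\infty$ as $t\to\infty$ (because each $S_k$ is finite, so any fixed initial block of the walk eventually lies below $t$).

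Next I would treat $\tau^\ast$. For $t\geq0$ one has $\tau^\ast(t)\geq1$ and, by the definition of the first passage time, $S_{\tau^\ast(t)-1}\leq t<S_{\tau^\ast(t)}$. Writing
\[
\frac{S_{\tau^\ast(t)-1}}{\tau^\ast(t)-1}\cdot\frac{\tau^\ast(t)-1}{\tau^\ast(t)}\ \leq\ \frac{t}{\tau^\ast(t)}\ <\ \frac{S_{\tau^\ast(t)}}{\tau^\ast(t)}
\]
and letting $t\to\infty$ (so that $\tau^\ast(t)\to\infty$), the two outer expressions converge to $\mu$ on the almost sure event above, hence $t/\tau^\ast(t)\to\mu$, i.e.\ $\tau^\ast(t)/t\to1/\mu$ a.s. The same argument applied to $S_{\rho^\ast(t)}\leq t<S_{\rho^\ast(t)+1}$ yields $\rho^\ast(t)/t\to1/\mu$ a.s. Finally, for $t\geq0$ every index $n$ with $S_n\leq t$ satisfies $n\leq\rho^\ast(t)$, while $S_0,\ldots,S_{\tau^\ast(t)-1}\leq t$; hence
\[
\tau^\ast(t)\ \leq\ N^\ast(t)\ \leq\ \rho^\ast(t)+1 ,
\]
and dividing by $t$ and using the two limits just obtained gives $N^\ast(t)/t\to1/\mu$ a.s.\ by a sandwich.

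I do not expect any real obstacle: this is a standard renewal-type computation, and, as the excerpt notes, the statement is already contained in Theorem 4.1 and formulae (4.7)--(4.8) of \cite{Gut:2009}. The only mild points of care are to perform all passages to the limit on the single almost sure event $\{n^{-1}S_n\to\mu\}$ and to have recorded beforehand that $\tau^\ast(t)$ and $\rho^\ast(t)$ are finite and diverge with $t$; accordingly I would keep the write-up to a few lines rather than develop it at length.
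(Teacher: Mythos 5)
Your argument is correct, and it is the standard sandwich proof for the renewal-theoretic strong laws. The paper itself explicitly declines to give a proof of Proposition~\ref{prop:slln}, instead citing Theorem~4.1 and formulae~(4.7)--(4.8) in \cite{Gut:2009}; your write-up essentially reconstructs the argument found there. The key ingredients are all in place: the a.s.\ finiteness of $\tau^\ast(t)$ and $\rho^\ast(t)$ (from $S_n\to+\infty$), their a.s.\ divergence as $t\to\infty$ (from finiteness of each $S_k$), the bracketing inequalities $S_{\tau^\ast(t)-1}\leq t<S_{\tau^\ast(t)}$ and $S_{\rho^\ast(t)}\leq t<S_{\rho^\ast(t)+1}$, the SLLN for $(S_n)$ applied along the random but a.s.\ divergent subsequences, and the sandwich $\tau^\ast(t)\leq N^\ast(t)\leq\rho^\ast(t)+1$. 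One cosmetic point: at the step where you divide by $\tau^\ast(t)-1$ and by $\rho^\ast(t)$, you implicitly restrict to $t$ large enough that these are at least~$1$, which you have already justified via divergence; a reader may appreciate a one-clause reminder, but the logic is sound. In short, the proposal supplies a correct self-contained proof of a result the paper only quotes.
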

We do not provide a new proof of Proposition \ref{prop:slln}. Rather, this proposition is an important ingredient of our proofs of Theorems \ref{v_t} and \ref{N_t}.

\subsection{Functional limit theorems}
For $a,b>0$, let  $(t^{(a, b)}_k, j^{(a, b)}_k)$ be the atoms of a Poisson random measure $N^{(a,b)}$ on $[0, \infty)\times(0, \infty)$ with mean measure $\mathbb{LEB}\times\mu_{a, b}$, where $\mathbb{LEB}$ is Lebesgue measure on $[0, \infty)$ and $\mu_{a, b}$ is a measure on  $(0, \infty]$ defined by
\begin{equation*}
\mu_{a, b}((x, \infty]) = ax^{-b},\quad x>0.
\end{equation*}
Denote by $D:=D[0,\infty)$ the Skorokhod space, that is, the set of c\`{a}dl\`{a}g functions defined on $[0,\infty)$. We shall use the $J_1$- and $M_1$-topologies, which are standard topologies on $D$. Comprehensive information on the $J_1$-topology and the $M_1$-topology can be found in \cite{Billingsley:1968, Ethier+Kurtz:2005} and \cite{Whitt:2002}, respectively. We write $\Longrightarrow$ to denote weak convergence in a function space.
\begin{thm}\label{thm1}
Suppose $\mu=\me[\xi]\in(-\infty, \infty)$ and $\mmp\{\eta>t\}\sim c/t$ as $t\to\infty$ for some $c>0$. Then
\begin{equation*}
\Big(\frac{\tau(ut)}{t}\Big)_{u\geq 0}~ \Longrightarrow~\big(\inf\{z\geq 0: \max_{k:\,t_k^{(c,\, 1)}\leq z}(\mu t_k^{(c,\, 1)} + j_k^{(c,\, 1)}) > u\}\big)_{u\geq 0}=:(X(u))_{u\geq 0},\quad t\to\infty
\end{equation*}
in the $M_1$-topology on $D$. The one-dimensional distributions of the limit process are given by
\begin{equation}\label{eq:marginal}
\mmp\{X(u)\leq y\} =
\begin{cases}
1-\big(\frac{u-\mu y}{u}\big)^{c/\mu}\1_{[0,\,u/\mu]}(y), & \text{if}\quad \mu > 0,\\
1-\big(\frac{u}{u + |\mu|y}\big)^{c/|\mu|}, & \text{if}\quad \mu < 0, \\
1-\eee^{-cy/u}, & \text{if}\quad \mu = 0
\end{cases}
\end{equation}
for $y\geq 0$ and $\mmp\{X(u)\leq y\}=0$ for $y<0$.
\end{thm}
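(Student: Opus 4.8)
The plan is to reduce the statement to a functional limit theorem for running maxima, followed by an inversion. Put $Y_t(z):=\max_{1\le k\le\lfloor zt\rfloor}T_k/t$ for $z\ge0$, with $\max\oslash:=0$. Since $\max_{1\le k\le n}T_k>ut$ is equivalent to $\tau(ut)\le n$, for every $u\ge 0$
\[
\frac{\tau(ut)}{t}=\inf\{z\ge0:\ Y_t(z)>u\}=:Y_t^{\leftarrow}(u),
\]
the generalized inverse being finite because $\max_{1\le k\le n}T_k\to+\infty$ a.s.\ as $n\to\infty$: this follows from Lemma \ref{lem:div} if $\mu>0$, and if $\mu\le0$ from $\sum_{k\ge1}\mmp\{\eta>Ck\}=\infty$ for all $C>0$ (so $\eta_k>Ck$ infinitely often) combined with the strong law $S_n/n\to\mu$. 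Likewise, with $Y(z):=\max_{k:\,t_k^{(c,1)}\le z}(\mu t_k^{(c,1)}+j_k^{(c,1)})$ and $\max\oslash:=0$, one has $X(u)=Y^{\leftarrow}(u)$. Because $Y_t$ and $Y$ are nondecreasing, right-continuous and unbounded, it suffices to prove (i) $Y_t\Longrightarrow Y$ in the $M_1$-topology on $D$, and (ii) that the inverse map $f\mapsto f^{\leftarrow}$ is a.s.\ continuous at $Y$; the continuous mapping theorem then yields the claim.

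For step (i), two ingredients enter. By the functional strong law of large numbers, $\sup_{0\le s\le T}|S_{\lfloor st\rfloor}/t-\mu s|\to0$ a.s.\ for each $T>0$. Under the assumption $\mmp\{\eta>t\}\sim c/t$, the classical point process convergence for i.i.d.\ sequences applies: $\sum_{k\ge1}\delta_{(k/t,\,\eta_k/t)}$ converges weakly on $[0,\infty)\times(0,\infty]$, equipped with the topology in which sets bounded away from $\{x=0\}$ are relatively compact, to $N^{(c,1)}$. Now fix $\varepsilon>0$ and write $Y_t=\max(A_t^{\varepsilon},B_t^{\varepsilon})$, where $A_t^{\varepsilon}(z)$ and $B_t^{\varepsilon}(z)$ are the running maxima of $(S_{k-1}+\eta_k)/t$ over those $k\le\lfloor zt\rfloor$ with $\eta_k\le\varepsilon t$, respectively $\eta_k>\varepsilon t$. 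On the one hand $A_t^{\varepsilon}(z)\le\max_{1\le k\le\lfloor zt\rfloor}S_{k-1}/t+\varepsilon$, so $\limsup_{t\to\infty}A_t^{\varepsilon}(z)\le\mu^+z+\varepsilon$ locally uniformly. On the other hand $B_t^{\varepsilon}$ is built from the finitely many points $(k/t,\eta_k/t)$ with $\eta_k/t>\varepsilon$, and combining the point process convergence with the uniform law of large numbers gives $B_t^{\varepsilon}\Longrightarrow B^{\varepsilon}$ in $(D,M_1)$, where $B^{\varepsilon}(z):=\max_{k:\,t_k^{(c,1)}\le z,\ j_k^{(c,1)}>\varepsilon}(\mu t_k^{(c,1)}+j_k^{(c,1)})$. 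Letting $\varepsilon\downarrow0$, $B^{\varepsilon}\uparrow Y$, and since a.s.\ there are atoms $t_k^{(c,1)}$ arbitrarily close to any $z$ one has $Y(z)\ge\mu^+z$; hence $\max(\mu^+z+\varepsilon,B^{\varepsilon}(z))\to Y(z)$. An interchange-of-limits argument (cf.\ Theorem 3.2 in \cite{Billingsley:1968}) then yields $Y_t\Longrightarrow Y$. Throughout, monotonicity of the processes is used, so that convergence of finite-dimensional distributions along a dense set of continuity points of $Y$ already provides $M_1$-tightness and the functional conclusion.

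For step (ii), one invokes the continuity properties of the inverse map on the nondecreasing functions in $(D,M_1)$, see Section 13.6 in \cite{Whitt:2002}: $f\mapsto f^{\leftarrow}$ is continuous at every nondecreasing, unbounded $f$ with $f(0)\ge0$ that is strictly increasing off its (at most countably many) maximal flat intervals and whose set of flat levels is Lebesgue-null. Almost every path of $Y$ has these properties --- the flat stretches of $Y$ lie between consecutive record atoms (or, when $\mu>0$, between a record atom and the instant the drift line $\mu z$ catches up with it), there are countably many of them, and their levels form an a.s.\ countable, hence null, random set --- so $\mmp\{Y\ \text{is a discontinuity point of}\ f\mapsto f^{\leftarrow}\}=0$. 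Finally, the one-dimensional distributions are read off from the Poisson structure: $\{X(u)\le y\}=\{Y(z)>u\ \text{for some}\ z\le y\}=\{N^{(c,1)}(\{(s,x):0\le s\le y,\ x>u-\mu s\})\ge1\}$. If $\mu>0$ and $y>u/\mu$, this region contains $\{(s,x):u/\mu<s\le y,\ x>0\}$, of infinite $N^{(c,1)}$-mean, so the probability equals $1$; otherwise the mean equals $\Lambda(y):=\int_0^y c\,((u-\mu s)\vee0)^{-1}\,ds<\infty$ and the probability equals $1-\eee^{-\Lambda(y)}$. Evaluating $\Lambda(y)$ in the cases $\mu>0$, $\mu<0$, $\mu=0$ produces \eqref{eq:marginal}.

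The main obstacle will be step (i), and inside it the interchange-of-limits step: one must control the contribution $A_t^{\varepsilon}$ of the light increments $\{\eta_k\le\varepsilon t\}$ uniformly in $z$ and show it is asymptotically dominated by $Y$, which relies on the not-quite-obvious inequality $Y(z)\ge\mu^+z$, and one must upgrade the point process convergence for the heavy increments to a bona fide continuous-mapping statement for the running maximum in the $M_1$-topology (the $J_1$-topology being insufficient). Checking in step (ii) that $Y$ a.s.\ avoids the discontinuity set of the inverse map is the secondary, more routine, point that nonetheless has to be spelled out.
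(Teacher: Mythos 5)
Your proposal is correct in outline and reaches the right conclusions, but it takes a genuinely different, much more self-contained route than the paper. The paper's proof is essentially three citations: it quotes Theorem 1.3.15 of \cite{Iksanov:2016} to get $J_1$-convergence of $\big(t^{-1}\max_{1\le k\le\lfloor ut\rfloor+1}T_k\big)_{u\ge0}$ to $\big(\sup_{t_k^{(c,1)}\le u}(\mu t_k^{(c,1)}+j_k^{(c,1)})\big)_{u\ge0}$, passes this through the first-passage functional using the continuity Lemma on p.~419 of \cite{Whitt:1971}, and reads off \eqref{eq:marginal} from the explicit formula for $\mmp\{R(y)\le u\}$ recorded in Remark 1.3.16 of \cite{Iksanov:2016}. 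You instead reprove the running-maximum FLT from first principles via the decomposition into light increments $\{\eta_k\le\varepsilon t\}$ (controlled by the functional SLLN for $S_{\lfloor\cdot t\rfloor}/t$) and heavy increments $\{\eta_k>\varepsilon t\}$ (controlled by point-process convergence), followed by an interchange of limits; you discuss the continuity of the inverse map from scratch; and you recompute the marginals directly from the Poisson mean measure $\Lambda(y)=\int_0^y c\,((u-\mu s)\vee0)^{-1}\,{\rm d}s$. Both your divergence argument for $\max_{1\le k\le n}T_k\to\infty$ when $\mu\le0$ and your observation that $Y(z)\ge\mu^+z$ (with the harder case $\mu>0$ resting on the infinite mass of $\{(s,x):s<z,\,x>\mu(z-s)\}$) are correct and are things the paper does not need to verify thanks to the citations. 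The trade-off is clear: the paper's approach is shorter because \cite{Iksanov:2016} has already done the heavy-tailed extreme-value work, while yours is longer but essentially reproves that piece; the cost on your side is that the interchange-of-limits step and the ``f.d.d.\ convergence of monotone processes implies $M_1$-convergence'' step are stated rather than fully argued, and your characterization of the discontinuity set of $f\mapsto f^{\leftarrow}$ is more elaborate than needed --- Corollary 13.6.4 in \cite{Whitt:2002} (or the cited lemma of \cite{Whitt:1971}) already gives continuity of the inverse on nondecreasing unbounded c\`adl\`ag functions in $M_1$ without the measure-zero-flat-levels discussion, since flats invert to $M_1$-jumps. Your marginal calculation agrees with \eqref{eq:marginal} in all three regimes, so the final answer is right; with the two sketched technical steps filled in, the proof would stand on its own.
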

\begin{rem}
Let $\theta (a,b)$ be a random variable having a beta distribution with positive parameters $a$ and $b$, that is, $$\mmp\{\theta(a,b)\in {\rm d}x\}=({\rm B}(a,b))^{-1}x^{a-1}(1-x)^{b-1}\1_{(0,1)}(x){\rm d}x,$$ where $B$ is the Euler beta function. In the case $\mu>0$, $X(u)$ has the same distribution as $\mu^{-1}u\theta(1,\mu^{-1}c)$. In particular, $X(1)<\mu^{-1}$ a.s., which justifies the claim made in Remark \ref{rem:1}. In the case $\mu<0$, the distribution of $X(u)$ is Pareto-like. In the case $\mu=0$, the process $(X(u))_{u\geq 0}$ is the inverse of an extremal process. It is known (see Proposition 4.8 on p.~183 in \cite{Resnick:1987}) that $(X(u))_{u\geq 0}$ has independent, but not stationary, increments and that its marginal distributions are exponential of mean $u/c$. Of course, the latter is confirmed by \eqref{eq:marginal}.
\end{rem}
\begin{thm}\label{thm2}
Suppose $\mu=\me[\xi]\in(-\infty, \infty)$ and $\mmp\{\eta>x\}\sim x^{-\alpha}\ell(x)$ as $x\to\infty$ for some $\alpha\in (0,1)$ and some $\ell$ slowly varying at $\infty$. Then
\begin{equation}\label{eq:inverseextremal}
\big(\mmp\{\eta>t\}\tau(ut)\big)_{u\geq 0}~\Longrightarrow~ \big(\inf\{z\geq 0: \max_{k:\, t_k^{(1,\,\alpha)}\leq z} j_k^{(1,\,\alpha)}>u\}\big)_{u\geq 0}=:(Y(u))_{u\geq 0},\quad t\to\infty
\end{equation}
in the $M_1$-topology on $D$. The one-dimensional distributions of the limit process, which is the inverse of an extremal process, are exponential and given by
\begin{equation*}
\mmp\{Y(u)>x\}= \eee^{-u^{-\alpha}x},\quad x,u>0.
\end{equation*}
\end{thm}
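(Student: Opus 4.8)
The plan is to reduce Theorem \ref{thm2} to a point-process convergence statement and then invert. First I would introduce the normalizing sequence $b(t)$ by $b(t) := 1/\mmp\{\eta > t\}$, which is regularly varying at $\infty$ with index $\alpha$, and rescale time by letting $n = n(t) := \lfloor b(t)\rfloor$, so that studying $\mmp\{\eta>t\}\tau(ut)$ for large $t$ amounts to studying $n^{-1}\tau(\cdot)$ along the corresponding scale. The key object is the sequence of extremes $M_n := \max_{1\le k\le n} T_k = \max_{1\le k\le n}(S_{k-1}+\eta_k)$. Since $\alpha \in (0,1)$ the random walk part $S_{k-1}$ is of order $k$ (if $\mu\neq 0$) or $o(k)$ fluctuations (if $\mu = 0$), while the perturbation $\eta_k$ can be of order $b(k)$ which, being regularly varying of index $\alpha > 1$ in the "inverse" sense — more precisely, the tail $\mmp\{\eta>x\} = x^{-\alpha}\ell(x)$ is heavier than any linear function — dominates the linear drift. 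Hence asymptotically $M_{\lfloor zb(t)\rfloor}/t$ is governed purely by $\max_{k\le zb(t)}\eta_k$, and the drift $S_{k-1}$ is negligible after division by $t$. This is the content that must be made precise: I would show that $\big(t^{-1}\max_{1\le k\le \lfloor zb(t)\rfloor} T_k\big)_{z\ge 0}$ converges weakly in $D$ (in the $J_1$- or $M_1$-topology, as appropriate) to the extremal process $\big(\max_{k:\, t_k^{(1,\alpha)}\le z} j_k^{(1,\alpha)}\big)_{z\ge 0}$ associated with the Poisson measure $N^{(1,\alpha)}$.

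Next I would carry out the point-process step underlying that convergence. By classical extreme-value theory (e.g.\ Resnick's book, which is already cited for related facts), the condition $\mmp\{\eta>x\}\sim x^{-\alpha}\ell(x)$ with the scaling $t^{-1}\eta_{\lfloor \cdot\, b(t)\rfloor}$ gives $\sum_{k\ge 1}\varepsilon_{(k/b(t),\, \eta_k/t)} \Longrightarrow N^{(1,\alpha)}$ on $[0,\infty)\times(0,\infty]$ in the vague topology. Adding the drift contributes atoms at $(k/b(t), (S_{k-1}+\eta_k)/t)$; since $\sup_{k\le zb(t)}|S_{k-1}|/t \to 0$ in probability for each fixed $z$ (because $|S_{k-1}|\le |\mu|k + o(k)$ uniformly, and $zb(t)\cdot|\mu|/t\to 0$ as $b(t) = o(t)$), a standard Slutsky/continuous-mapping argument in the space of point measures shows $\sum_{k\ge 1}\varepsilon_{(k/b(t),\, T_k/t)}$ has the same weak limit $N^{(1,\alpha)}$. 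The running-maximum functional $N\mapsto (z\mapsto \max_{\text{atoms }(s,x):\,s\le z} x)$ is continuous (in the $M_1$, and even $J_1$, sense at continuity points of the limit), yielding the claimed convergence of $\big(t^{-1}M_{\lfloor zb(t)\rfloor}\big)_z$ to the extremal process.

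Finally I would invert. Observe the exact identity $\tau(t) = \inf\{n\ge 1: M_n > t\}$, since $T_n > t$ for some $n\le m$ iff $M_m > t$; equivalently $\mmp\{\eta>t\}\tau(ut) \approx n^{-1}\inf\{m: M_m > ut\} = \inf\{z: t^{-1}M_{\lfloor zb(t)\rfloor} > u\}$. I would then invoke the continuity of the first-passage (inverse) functional with respect to the $M_1$-topology — this is where I expect the main technical obstacle — appealing to the standard result that if $f_t \to f$ in $M_1$ and $f$ is a.s.\ strictly increasing and continuous (or a.s.\ a continuity point of the inverse map), then the right-continuous inverses converge in $M_1$; the limit extremal process built from $N^{(1,\alpha)}$ is a.s.\ strictly increasing in the relevant sense and has no fixed discontinuities, so its inverse is well-behaved and one obtains \eqref{eq:inverseextremal}. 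The one-dimensional marginals follow from $\mmp\{Y(u) > x\} = \mmp\{\max_{k:\,t_k^{(1,\alpha)}\le x} j_k^{(1,\alpha)} \le u\} = \mmp\{N^{(1,\alpha)}([0,x]\times(u,\infty]) = 0\} = \exp(-x\mu_{1,\alpha}((u,\infty])) = \exp(-xu^{-\alpha})$, using the Poisson void probability. The chief difficulties to watch are: (i) handling the $M_1$-continuity of the composition "running max then inverse" rigorously, including checking that the drift perturbation does not create spurious jumps that would break $J_1$- but not $M_1$-convergence; and (ii) the uniform-in-$z$ control of $\sup_{k\le zb(t)} S_{k-1}/t$, which must be upgraded from fixed-$z$ to a genuine functional (locally uniform) statement — this follows from the SLLN $S_n/n \to \mu$ together with $b(t)/t \to 0$, but should be stated carefully to justify the continuous-mapping step at the level of processes on $D[0,\infty)$.
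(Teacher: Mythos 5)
Your approach is essentially the one taken in the paper: both reduce the theorem to weak convergence of the normalized running maximum $\max_{k\le \cdot}T_k$ to an extremal process (exploiting $\alpha<1$, so that the linear drift $S_{k-1}$ is negligible under the relevant normalization, via a functional LLN), then apply $M_1$-continuity of the first-passage (inverse) functional from Whitt and compute marginals by the Poisson void probability. The only cosmetic difference is the parametrization: you scale time by $b(t)=1/\mmp\{\eta>t\}$ and values by $t$ directly, whereas the paper scales values by a regularly varying inverse $a(t)$ of $b$ and time by $t$ (citing Proposition 1.3.13(ii) of Iksanov 2016 for the max-process convergence), and then recovers \eqref{eq:inverseextremal} by substituting $t\mapsto a^{-1}(t)$ at the end.
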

\begin{rem}\label{rem:resemblance}

Theorems \ref{thm1} and \ref{thm2} will be proved by an `inversion' of the known functional limit theorems for the maximum $\max_{1\leq j\leq n}\,T_j$, properly scaled. Assume now that $\lim_{n\to\infty}\,T_n=-\infty$ a.s., which particularly implies that $\sup_{j\geq 1}\,T_j$ is a.s.\ finite. Then it makes sense to investigate the distributional tail behavior of $\sup_{j\geq 1}\,T_j$. Actually, this used to be a rather popular subject of research in the recent past \cite{Araman+Glynn:2006, Hao+Tang+Wei:2009, Palmowski+Zwart:2007, Palmowski+Zwart:2010, Robert:2005}.

In the setting of Theorem \ref{thm1}, a major contribution to the limit is made by large values of the perturbations $(\eta_k)_{k\geq 1}$ and also by the standard random walk $(S_n)_{n\geq 0}$ via a law of large numbers. This bears a resemblance to Theorem 4 in \cite{Araman+Glynn:2006} and Theorem 1 in \cite{Palmowski+Zwart:2007}, see also \cite{Palmowski+Zwart:2010}.
\end{rem}

Theorem \ref{thm4} is a result of different flavor. It quantifies the rate of convergence in laws of large numbers for $\tau(t)$, $N(t)$ and $\rho(t)$. We write $\fdc$ to denote weak convergence of finite-dimensional distributions.
\begin{thm}\label{thm4}
Suppose $\mu=\me[\xi]\in(0, \infty)$ and $\sigma^2:={\rm Var}\,[\xi]\in (0,\infty)$.

\noindent If $\me[\eta^+]<\infty$, then
\begin{equation}\label{eq:tau}
\Big(\frac{\tau(ut)-\mu^{-1}ut}{(\sigma^2\mu^{-3}t)^{1/2}}\Big)_{u\geq0}~\fdc~ \big(B(u)\big)_{u\geq0},\quad t\to\infty,
\end{equation}
where $(B(u))_{u\geq0}$ is a standard Brownian motion.

\noindent If $\me[\eta^-]<\infty$, then
\begin{equation}\label{eq:rho}
\Big(\frac{\rho(ut)-\mu^{-1}ut}{(\sigma^2\mu^{-3}t)^{1/2}}\Big)_{u\geq0}~\fdc~ \big(B(u)\big)_{u\geq0},\quad t\to\infty.
\end{equation}

\noindent If $\me [\eta]\in (-\infty,\infty)$, then
\begin{multline}
		\Big(\Big(\frac{\tau(ut)-\mu^{-1}ut}{(\sigma^2\mu^{-3}t)^{1/2}}\Big)_{u\geq0}, \Big(\frac{N(ut)-\mu^{-1}ut}{(\sigma^2\mu^{-3}t)^{1/2}}\Big)_{u\geq 0}, \Big(\frac{\rho(ut)-\mu^{-1}ut}{(\sigma^2\mu^{-3}t)^{1/2}}\Big)_{u\geq0}\Big)\\~\fdc~
		\big(\big(B(u)\big)_{u\geq0}, \big(B(u)\big)_{u\geq0}, \big(B(u)\big)_{u\geq 0}\big),\quad t\to\infty.\label{eq:mix}
\end{multline}
\end{thm}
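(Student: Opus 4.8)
The plan is to transfer the finite–dimensional central limit theorem from the unperturbed walk $(S_n)_{n\ge 0}$ to $T$, showing that the perturbations $\eta_n$ contribute only an error of order $o_{\mmp}(\sqrt t)$. The starting point is the classical statement for $(S_n)$: since $\mu,\sigma^2\in(0,\infty)$, Donsker's invariance principle combined with the standard inversion argument for first–passage times (cf.\ \cite{Gut:2009}) gives $((\tau^\ast(ut)-\mu^{-1}ut)(\sigma^2\mu^{-3}t)^{-1/2})_{u\ge 0}\fdc (B(u))_{u\ge 0}$; moreover $N^\ast(t)-\tau^\ast(t)$ and $\rho^\ast(t)-\tau^\ast(t)$ are bounded in probability ($O_{\mmp}(1)$) as $t\to\infty$, because the number of indices $n$ with $S_n$ in a fixed neighbourhood of level $t$ (before the first passage, respectively after the last exit) is tight in $t$ by elementary renewal theory. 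Hence $(\tau^\ast(ut),N^\ast(ut),\rho^\ast(ut))$, centered by $\mu^{-1}ut$ and normalized by $(\sigma^2\mu^{-3}t)^{1/2}$, converges in finite–dimensional distributions to $((B(u)),(B(u)),(B(u)))$ with one and the same Brownian motion.

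The core of the proof consists of two estimates: (A) if $\me[\eta^+]<\infty$, then $t^{-1/2}(\tau(t)-\tau^\ast(t))\to 0$ in probability; (B) if $\me[\eta^-]<\infty$, then $t^{-1/2}(\rho(t)-\rho^\ast(t))\to 0$ in probability. In (A) the bound $\tau(t)\le\tau^\ast(t)+O_{\mmp}(1)$ is the easy half: after the first passage the walk $(S_n)$ exceeds $t$ at all but $O_{\mmp}(1)$ times, so among the next $m$ steps at least $m-O_{\mmp}(1)$ have $S_{k-1}>t$, whence $\{T_k\le t\}\subseteq\{\eta_k<0\}$; since the $\eta_k$ are i.i.d.\ and $\mmp\{\eta<0\}<1$ in the nondegenerate case, $\mmp\{\tau(t)>\tau^\ast(t)+m\}$ tends to $0$ as $m\to\infty$ uniformly in $t$ (the degenerate case $\eta\le 0$ a.s.\ being handled identically with an explicit product of tail probabilities of $\eta^-$).

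The reverse inequality in (A) — that the perturbations cannot trigger an anomalously early first passage — is the crux. I would show $\mmp\{\tau(t)<\tau^\ast(t)-\varepsilon\sqrt t\}\to 0$ for each fixed $\varepsilon>0$ by conditioning on $(S_n)_{n\ge 0}$ (which determines $\tau^\ast(t)$) and using independence of the $\eta$'s together with a union bound:
\[
\mmp\{\tau(t)<\tau^\ast(t)-\varepsilon\sqrt t\}\ \le\ \me\Big[\,\textstyle\sum_{1\le k\le \tau^\ast(t)-\varepsilon\sqrt t}\mmp\{\eta>t-S_{k-1}\mid (S_n)_n\}\Big].
\]
The indices are then split according to the distance of $S_{k-1}$ from $t$. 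Those with $t-S_{k-1}>\tfrac12\mu\varepsilon\sqrt t$ contribute, after grouping by $t-S_{k-1}\in(m,m+1]$ and using the renewal–density bound $\sup_{x}\me[\#\{n\ge 0:S_n\in[x,x+1)\}]\le C<\infty$ (valid for any random walk with $\mu>0$), at most $C\sum_{m>\frac12\mu\varepsilon\sqrt t}\mmp\{\eta>m\}\le C'\,\me[(\eta^+-\tfrac12\mu\varepsilon\sqrt t)^+]\to 0$ by $\me[\eta^+]<\infty$. Those with $t-S_{k-1}\in(m,m+1]$, $m\le\tfrac12\mu\varepsilon\sqrt t$, force the walk both to visit the strip $[t-m-1,t-m)$ and then to stay below $t$ for the remaining $\ge\varepsilon\sqrt t$ steps preceding the first passage; conditioning on the strip visit, the latter has probability $\le q_m(t):=\mmp\{\text{first passage of a walk from }0\text{ above }m+1\text{ exceeds }\varepsilon\sqrt t\}$, so — again via the renewal–density bound — the total contribution is at most $C\sum_{m\ge 0}\mmp\{\eta>m\}\,q_m(t)$, which tends to $0$ by dominated convergence, since $q_m(t)\to 0$ as $t\to\infty$ for each fixed $m$ while $\sum_m\mmp\{\eta>m\}<\infty$. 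Together with the easy half this gives (A). Part (B) is proved symmetrically, exchanging the roles of $+\infty$ and $-\infty$, of first passage and last exit, and of $\eta^+$ and $\eta^-$ (equivalently one may time–reverse).

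Finally I would sandwich $N$. From \eqref{eq:ineq} and the identities $N(t)-(\tau(t)-1)=\#\{k\ge\tau(t):T_k\le t\}$ and $\rho(t)-N(t)=\#\{k\le\rho(t):T_k>t\}$, a conditioning argument identical in spirit to the one above (again through the renewal–density bound, after splitting each count according to whether $S_{k-1}\le t$ or $S_{k-1}>t$) shows that both counts are $o_{\mmp}(\sqrt t)$ as soon as $\me[\eta]\in(-\infty,\infty)$; hence then all three of $\tau(t),N(t),\rho(t)$ differ from one another — and, by (A), from $\tau^\ast(t)$ — by $o_{\mmp}(\sqrt t)$. Putting the pieces together: if $\me[\eta^+]<\infty$, (A) and the first paragraph yield \eqref{eq:tau}; if $\me[\eta^-]<\infty$, (B) and the first paragraph yield \eqref{eq:rho}; if $\me[\eta]\in(-\infty,\infty)$, the $o_{\mmp}(\sqrt t)$–closeness of $\tau,N,\rho$ together with (A) and the first paragraph yield \eqref{eq:mix} with the same Brownian motion in all three coordinates — in each case the passage from "$o_{\mmp}(\sqrt t)$ errors" to the displayed f.d.d.\ convergence being Slutsky's lemma applied coordinatewise. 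I expect the main obstacle to be the reverse inequality in (A) (and its mirror in (B)): the naive union bound over $k$ there would demand $\me[(\eta^+)^2]<\infty$, and it is precisely the renewal–density estimate that makes $\me[\eta^+]<\infty$ suffice.
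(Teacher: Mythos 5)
Your strategy is genuinely different from the paper's, and in outline it works; let me contrast the two. The paper never compares $\tau(t)$ to $\tau^\ast(t)$. Instead it proves a finite-dimensional CLT for the running maximum $\max_{1\le j\le \lfloor ut\rfloor+1}T_j$, by writing it as $S_{\lfloor ut\rfloor}+\max(\eta_{\lfloor ut\rfloor+1},\eta_{\lfloor ut\rfloor}-\xi_{\lfloor ut\rfloor},\ldots,\eta_1-S_{\lfloor ut\rfloor})$ and using exchangeability (a time-reversal) to recognise the second summand as $\max(\eta_0,\sup_k\hat T_k)$ for an auxiliary perturbed walk $\hat T_k=\eta_k-S_k$ with negative drift; under $\me[\eta^+]<\infty$ this is a.s.\ finite, hence $O_\mmp(1)$. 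The statement for $\tau$ then follows by inverting through the identity $\tau(t)=\inf\{n:\max_{1\le j\le n}T_j>t\}$. For $\rho$ the paper uses the analogous representation via $\inf_{k\ge n+1}T_k$, and for $N$ it simply sandwiches between $\tau-1$ and $\rho$ with a convergence-together argument. Your route instead proves $t^{-1/2}(\tau(t)-\tau^\ast(t))\to 0$ and $t^{-1/2}(\rho(t)-\rho^\ast(t))\to 0$ in probability via a union bound plus a renewal-density estimate, and then transfers the known CLT for $\tau^\ast$, $\rho^\ast$. Both proofs ultimately exploit the uniform bound $\sup_x\me[\#\{n:S_n\in[x,x+1)\}]<\infty$; the paper has a self-contained version as Lemma~\ref{lem:gen_shot_noice_process}. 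Your approach is more hands-on and explains exactly where $\me[\eta^+]<\infty$ enters, whereas the paper's max-transform argument is structurally cleaner and sidesteps a real subtlety your sketch glosses over: since $\xi_k$ and $\eta_k$ are allowed to be \emph{dependent}, conditioning on the whole trajectory $(S_n)_{n\ge 0}$ does \emph{not} give $\mmp\{\eta_k>t-S_{k-1}\mid(S_n)_n\}=\mmp\{\eta>t-S_{k-1}\}$, and for the ``near'' indices the event $\{k<\tau^\ast(t)-\varepsilon\sqrt t\}$ involves $\xi_k$ (through $S_k$), which is not independent of $\eta_k$. One must condition on $\mathcal F_{k-1}$ only, and handle the post-$k$ trajectory via $(\xi_j)_{j\ge k+1}$ and a separate treatment of the single step $\xi_k$ (or work with $\{S_j\le t,\ k+1\le j\le k+\lfloor\varepsilon\sqrt t\rfloor\}$ in place of $\{k<\tau^\ast(t)-\varepsilon\sqrt t\}$). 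With that repair, your estimate goes through, but this dependence issue is precisely what the paper's decomposition is designed to avoid. Your final sandwich for $N$ is overkill relative to the paper (which only needs the two-sided f.d.d.\ convergence plus monotonicity), but it is not wrong.
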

\begin{rem}\label{rem:cohn}
Let $(Z_n)_{n\geq 1}$ be a perturbed random walk as in Remark \ref{rem:gut}. Assume that $\sigma^2={\rm Var}[\xi]\in (0,\infty)$ and $\lim_{n\to\infty} n^{-1/2}\max_{1\leq k\leq n}\,|\theta_k|=0$ in probability. According to Theorem 1 in \cite{Larrson:2000}, $$\Big(\frac{\tau_Z(un)-\mu^{-1}un}{(\sigma^2\mu^{-3}n)^{1/2}}\Big)_{u\geq0}~\fdc~ \big(B(u)\big)_{u\geq0},\quad n\to\infty$$ in the $J_1$-topology on $D$, where $(B(u))_{u\geq 0}$ is a standard Brownian motion.

Observe that $\lim_{n\to\infty} n^{-1/2}\max_{1\leq k\leq n}\,|\eta_k|=0$ in probability if, and only if, $\lim_{t\to\infty}t^2\mmp\{|\eta|>t\}=0$. Hence, both \eqref{eq:tau} and its functional $J_1$-version hold true under the assumptions $\sigma^2\in (0,\infty)$ and $\lim_{t\to\infty}t^2\mmp\{|\eta|>t\}=0$ (since $u\mapsto \tau(u)$ is a.s.\ nondecreasing, a passage from a discrete parameter $n$ to a continuous parameter $t$ is trivial). We believe that the functional $J_1$-versions of  \eqref{eq:tau} and \eqref{eq:rho} hold true under the assumptions of Theorem \ref{thm4} ensuring weak convergence of the finite-dimensional distributions.
At the moment, a proof of this claim is beyond our reach.
\end{rem}

The limit theorem for $N(t)$ is only given under rather restrictive assumption $\me [\eta]\in (-\infty,+\infty)$. Our final result demonstrates that when stated under the optimal assumption $\me[\eta^-]<\infty$ the limit theorem for $N(t)$ is more interesting than those for $\tau(t)$ and $\rho(t)$. Its feature is a two-term centering. We stress that Theorem \ref{thm5} is a result on weak convergence on $D$ rather than weak convergence of finite dimensional distributions.
\begin{thm}\label{thm5}
Suppose $\mu=\me[\xi]\in(0, \infty)$, $\sigma^2={\rm Var}\,[\xi]\in (0,\infty)$ and $\me[\eta^-]<\infty$. Then
\begin{equation}\label{eq:N}
\Big(\frac{N(ut)-\mu^{-1}ut+\mu^{-1}\int_0^{ut}\mmp\{\eta>y\}{\rm d}y}{(\sigma^2\mu^{-3}t)^{1/2}}\Big)_{u\geq0}~\Longrightarrow~ \big(B(u)\big)_{u\geq0},\quad t\to\infty
\end{equation}
in the $J_1$-topology on $D$, where $(B(u))_{u\geq0}$ is a standard Brownian motion.
\end{thm}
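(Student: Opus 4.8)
The plan is to exploit the well-known decomposition $N(t) = \rho(t) - (\text{number of visits to } (-\infty,t] \text{ after the last one})$ is not quite the right handle; instead I would write $N(t)$ in terms of the standard-walk quantity $N^\ast$ plus a correction coming from the perturbations $\eta_n$. The key identity is
\begin{equation*}
N(t) = \sum_{n\geq 1}\1_{\{S_{n-1}+\eta_n\leq t\}},
\end{equation*}
and the idea is to compare this with $N^\ast(t)=\sum_{n\geq 0}\1_{\{S_n\leq t\}}$. Conditioning on the walk $(S_n)$, the event $\{S_{n-1}+\eta_n\leq t\}$ has (conditional) probability $F(t-S_{n-1})$ where $F$ is the law of $\eta$; so one expects $N(t)\approx \sum_{n\geq 1}F(t-S_{n-1})$, and since $\sum_{n\geq 1}F(t-S_{n-1})$ is close to $\sum_{n\geq 1}\mathbb{P}\{\eta\le t - S_{n-1}\}$, a renewal-type computation on $[0,t]$ replaces this by $\mu^{-1}\int_0^t \mathbb{P}\{\eta\le t-y\}\,\mathrm{d}y = \mu^{-1}ut - \mu^{-1}\int_0^{ut}\mathbb{P}\{\eta>y\}\,\mathrm{d}y + o(\sqrt t)$, which is exactly the centering appearing in \eqref{eq:N}. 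Thus the two-term centering is not an artifact: it is the conditional mean of $N(ut)$ given the driving random walk.

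The rigorous route I would take is:

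\textbf{Step 1 (reduction to a martingale-type fluctuation).} Write $N(ut) = M(ut) + A(ut)$ where $A(ut):=\sum_{n\geq 1}\mathbb{P}\{\eta\le ut - S_{n-1}\}$ (with the convention that terms with $S_{n-1}$ large contribute negligibly, using $\me[\eta^-]<\infty$ which by Proposition~\ref{assert:nrho} guarantees $N(t)<\infty$ a.s.) and $M(ut):= N(ut)-A(ut)$. I would show that $(\sigma^2\mu^{-3}t)^{-1/2}M(ut)$ converges to $0$, at least in the finite-dimensional sense and then uniformly on compacts, because the summands in $M$, conditionally on $(S_n)$, are centered and, after summing over the $O(t)$ relevant indices, produce fluctuations of order $\sqrt{t}$ times $\sum_n \mathrm{Var}$ of indicator-type terms; the point is that $\int_0^\infty F(y)(1-F(y))\,\mathrm{d}y$ can be infinite, so this is where $\me[\eta^-]<\infty$ (equivalently $\int_0^\infty(1-F(-y))\,\mathrm{d}y<\infty$ controlling the left tail) must be used, together with the fact that $F(ut-S_{n-1})$ is close to $1$ for most relevant $n$ so $F(1-F)$ is small. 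Actually the cleanest version: bound $\mathrm{Var}(M(ut)\mid (S_n)) = \sum_{n}F(ut-S_{n-1})(1-F(ut-S_{n-1})) \le \sum_n (1-F(ut-S_{n-1}))$, and show $\sum_n (1-F(ut-S_{n-1})) = O(1)$ in probability by a renewal argument using $\int_0^\infty(1-F(y))\wedge 1\,\mathrm{d}y<\infty$ is false in general — instead one uses $\me[\eta^-]<\infty \Leftrightarrow \int_0^\infty (1-F(-y))\,\mathrm{d}y<\infty$, i.e. the relevant tail is the one near $-\infty$, and $1-F(ut-S_{n-1})$ is small precisely when $ut-S_{n-1}$ is large, i.e. when $S_{n-1}\ll ut$, which is the bulk. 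This localization is the crux.

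\textbf{Step 2 (renewal asymptotics for $A$).} Using the strong law $S_n/n\to\mu$ and the renewal theorem, show $A(ut) = \mu^{-1}\int_0^{ut}F(y)\,\mathrm{d}y + \widetilde M(ut) + o(\sqrt{t})$, where $\widetilde M(ut)$ captures the Gaussian fluctuation of the random walk. Concretely, $A(ut)$ is a smooth functional of the walk; writing $A(ut)=\sum_{n\ge1}g(ut-S_{n-1})$ with $g=F$ bounded and monotone, I would use the known CLT for such "smooth" additive functionals of a renewal process: $A(ut)$ fluctuates like $\mu^{-1}$ times the first-passage level, reproducing $(B(u))$ with the asserted variance. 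Equivalently, since $\sum_{n\ge 1}g(ut-S_{n-1})$ over the range where $g$ is essentially $1$ contributes $\rho(ut)$-many terms, and $\rho(ut)$ satisfies the CLT \eqref{eq:rho}, one gets $A(ut) = \rho^\ast\text{-type count} \cdot 1 + \text{(boundary correction of size }o(\sqrt t))$, so that $A(ut) - (\mu^{-1}ut - \mu^{-1}\int_0^{ut}(1-F(y))\,\mathrm{d}y)$ has the same Gaussian limit as $\rho(ut)-\mu^{-1}ut$, namely $(\sigma^2\mu^{-3}t)^{1/2}B(u)$.

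\textbf{Step 3 (assembling the FCLT in $J_1$).} Combine Steps 1 and 2 to get \eqref{eq:N} in the finite-dimensional sense, then upgrade to $J_1$-convergence on $D$ by proving tightness. Tightness would follow from monotonicity-type bounds: $N(ut)$ is nondecreasing in $u$, and the centering $\mu^{-1}ut - \mu^{-1}\int_0^{ut}(1-F(y))\mathrm{d}y$ is nondecreasing, continuous, and asymptotically linear (since $\int_0^{ut}(1-F(y))\mathrm{d}y = o(t)$ as $\me[\eta^-]<\infty$ gives only $o(t)$, not $O(1)$, for the right tail — wait: $\me[\eta^-]<\infty$ controls the left tail, the right tail $\int_0^{x}(1-F(y))\mathrm{d}y$ may grow; but it grows sublinearly iff $\me[\eta^+]<\infty$, which we do NOT assume, so this integral can be $\asymp t$ — hence the centering genuinely deviates from $\mu^{-1}ut$ at first order, matching the theorem's emphasis). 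Tightness of the normalized difference in $J_1$ then follows from a standard maximal-inequality argument for sums of the conditionally-independent indicators, plus the FCLT for the underlying random walk (Donsker) feeding through Step 2.

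\textbf{Main obstacle.} The hard part will be Step 1: showing the martingale remainder $M(ut)$ is negligible at scale $\sqrt t$ \emph{uniformly} in $u$ on compact sets, using only $\me[\eta^-]<\infty$. The subtlety is that $\mathrm{Var}(M(ut)\mid(S_n))=\sum_n F(ut-S_{n-1})(1-F(ut-S_{n-1}))$, and although each term is $\le 1-F(ut-S_{n-1})$ which is summable over the "bulk" indices (where $S_{n-1}\approx n\mu \le ut$), near the boundary $S_{n-1}\approx ut$ the terms are of order $1$ and there are $O(\sqrt t)$ such indices by the CLT, giving $\mathrm{Var}(M)=O(\sqrt t)$ and hence $M=O(t^{1/4})=o(\sqrt t)$ — so it does work, but making the boundary count rigorous and uniform requires a careful truncation and a Kolmogorov/Doob maximal inequality applied conditionally, then de-conditioned. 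A secondary technical point is justifying the interchange of the renewal asymptotics in Step 2 with the Gaussian fluctuations of $(S_n)$, i.e. proving that $A(ut)$, a nonlinear functional of the walk, has the claimed joint Gaussian limit with the correct covariance $\sigma^2\mu^{-3}(u\wedge v)$; this is cleanest via the representation $A(ut)\approx \rho(ut) + (\text{smooth }o(\sqrt t)\text{ correction})$ and invoking \eqref{eq:rho} already proved in Theorem~\ref{thm4}.
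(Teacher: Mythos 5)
Your decomposition is essentially the one the paper uses: write $N(t)$ as the sum of a conditional-mean ``shot-noise'' term $\sum_k F(t-S_k)$ and a conditionally centered ``martingale'' remainder, obtain the Brownian limit with the two-term centering from an FCLT for shot noise (the paper's Lemma on shot noise, a version of the Iksanov--Rashytov result), and show the martingale remainder is uniformly negligible at scale $\sqrt t$. You also correctly identify the structural reason for the two-term centering. However, several steps of your sketch are either wrong or left undone, and these are exactly where the work lives. (i) The conditioning must be on $\mathcal{G}_n:=\sigma((\xi_k,\eta_k)_{k\leq n})$, not on $\sigma((S_m)_{m\geq 0})$: since $\xi$ and $\eta$ may be dependent, $\eta_n$ is not independent of the whole walk, so $\mmp\{\eta_n\leq t-S_{n-1}\mid(S_m)_m\}\neq F(t-S_{n-1})$ in general; with the correct filtration the martingale structure and the quadratic-variation formula are recovered. (ii) Your claim $\mathrm{Var}(M(ut))=O(\sqrt t)$ is false under the stated hypotheses. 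The ``bulk'' indices do not contribute $O(1)$: their contribution is of order $\mu^{-1}\int_0^{ut}(1-F(y))\,\mathrm{d}y$, which can be as large as $t/\log t$ (or worse) when $\me[\eta^+]=\infty$. The correct bound is only $\mathrm{Var}(M(ut))=o(t)$, which follows from $1-F(y)\to0$ and Ces\`aro; this still yields pointwise negligibility $M(ut)=o_{\mmp}(\sqrt t)$, but it materially changes the quantitative inputs you would need for any tightness argument.

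(iii) The uniformity in $u$ over compacts, which you yourself flag as the main obstacle, is the technical core of the proof and you do not carry it out. The martingale $M(\cdot)$ is not monotone, so pointwise negligibility does not lift to uniform negligibility for free, and ``a standard maximal inequality'' does not dispose of it. The paper proves an increment bound $\me\big[(I_3(u)-I_3(v))^{2s}\big]\leq C\,(g(u-v))^s$ with $g(r)=\sum_{n\leq r}(1-F(n))=o(r)$, via the Burkholder--Davis--Gundy inequality, and then runs a dyadic chaining argument over a mesh of depth $\kappa(t)\asymp\log t$; none of this is a routine consequence of your sketch. (iv) The boundary pieces are not addressed. If $A(ut)=\sum_{n\geq1}F(ut-S_{n-1})$ is taken over all $n$, the terms with $S_{n-1}\leq 0$ and with $S_{n-1}>ut$ need separate control (the latter is not automatically small since $F(ut-S_{n-1})>0$ when $\eta$ can be negative). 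The paper isolates these contributions and bounds the $S_k>t$ part using the direct Riemann integrability of $y\mapsto F(y)\1_{(-\infty,0)}(y)$, which is exactly equivalent to $\me[\eta^-]<\infty$, together with a renewal-type lemma and a subsequence Borel--Cantelli argument. These boundary estimates are not cosmetic; they are where the standing assumption $\me[\eta^-]<\infty$ enters the proof, and omitting them leaves a genuine gap.
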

\begin{rem}
In the case of nonnegative $\xi$ and $\eta$ relation \eqref{eq:N} was proved in Theorem 3.2 of \cite{Alsmeyer+Iksanov+Marynych:2017} under the extra assumption $\me [\eta^a]<\infty$ for some $a>0$. Theorem \ref{thm5} demonstrates that the latter assumption is not needed.
\end{rem}
\begin{rem}
If $\me [(\eta^+)^{1/2}]<\infty$, then $\lim_{t\to\infty}t^{-1/2}\int_0^t \mmp\{\eta>y\}{\rm d}y=0$. Thus, relation \eqref{eq:N} simplifies in this case to
\begin{equation*}
\Big(\frac{N(ut)-\mu^{-1}ut}{(\sigma^2\mu^{-3}t)^{1/2}}\Big)_{u\geq0}~\Longrightarrow~ \big(B(u)\big)_{u\geq0},\quad t\to\infty
\end{equation*}
in the $J_1$-topology on $D$. If $\me [(\eta^+)^{1/2}]=\infty$, then the two-term centering is inevitable.
\end{rem}

\section{Proofs of the laws of large numbers}

\subsection{Proof of Theorem \ref{weak}}\label{proof_v_t_weak}
We start by proving the equivalence of (W2) and (W3).

\noindent $({\rm W}2)\Rightarrow ({\rm W}3)$. Fix any $\varepsilon\in (0,\mu)$. Since $\lim_{n\to\infty}(S_n -(\mu - \varepsilon)n) = \infty$ a.s., we conclude that the random variable $N:=\sup\{k\geq 0: S_k\leq(\mu-\varepsilon)k\}$
is a.s.\ finite. Let $(a_n)_{n\geq 1}$ be a sequence of
nonnegative integers satisfying $\lim_{n\to\infty}a_n = \infty$ and $a_n=o(n)$, for instance, $a_n =\lfloor \log n\rfloor$ for $n\in\mn$. Here, $\lfloor x\rfloor$ denotes the integer part of $x$. The following inequalities hold a.s.\ on the event $\{N+1\leq a_n\}$
\begin{multline*}
\max_{0\leq k \leq n}{(S_k + \eta_{k + 1})} \geq \max_{N + 1 \leq k \leq n}{(S_k + \eta_{k + 1})} \\
\geq \max_{N + 1 \leq k \leq n}{((\mu - \varepsilon)k + \eta_{k + 1})} \geq \max_{a_n \leq k \leq n}{((\mu - \varepsilon)k + \eta_{k + 1})}.
\end{multline*}
Using this we obtain
\begin{multline*}
\mmp\{\max_{0\leq k \leq n}{(S_k + \eta_{k + 1})} > (\mu + \varepsilon)n\} \geq \mmp\{\max_{0\leq k \leq n}{(S_k + \eta_{k + 1})} > (\mu + \varepsilon)n, N+1 \leq a_n\}\\ \geq \mmp\{\max_{a_n\leq k \leq n}{((\mu-\varepsilon)k+\eta_{k+1})}>(\mu + \varepsilon)n, N+1 \leq a_n\} = \mmp\{\max_{a_n\leq k \leq n}\,{((\mu - \varepsilon)k + \eta_{k + 1})} >(\mu + \varepsilon)n\}\\ - \mmp\{\max_{a_n\leq k \leq n}\,{((\mu - \varepsilon)k + \eta_{k + 1})} > (\mu + \varepsilon)n, N+1 > a_n\}.
\end{multline*}
Condition (W2) ensures $\lim_{n\to\infty}\mmp\{\max_{0\leq k \leq n}{(S_k + \eta_{k + 1})}>(\mu+\varepsilon)n\}=0$. This together with $\lim_{n\to\infty}\mmp\{N+1>a_n\} = 0$ proves
\begin{equation*}
\mmp\{\max_{a_n\leq k\leq n}((\mu - \varepsilon)k + \eta_{k + 1})\leq (\mu + \varepsilon)n\} = \prod_{k = a_n}^n F((\mu + \varepsilon)n-(\mu - \varepsilon)k)~\to~1,\quad n\to\infty,
\end{equation*}
where $F(x):=\mmp\{\eta\leq x\}$ for $x\in\mr$. Equivalently, $$\lim_{n\to\infty} \sum_{k=a_n}^{n}(-\log F((\mu + \varepsilon)n - (\mu - \varepsilon)k))=0.$$ By monotonicity,
\begin{multline*}
\sum_{k=a_n}^{n}(-\log F(n(\mu + \varepsilon) - (\mu - \varepsilon)k))\geq (n-a_n+1)(-\log F((\mu + \varepsilon)n - (\mu - \varepsilon)a_n)\\\geq (n-a_n)(-\log F((\mu+\varepsilon)n))
\end{multline*}
and thereupon $\lim_{n\to\infty}n(1-F((\mu+\varepsilon)n))=0$. Appealing to monotonicity once again we arrive at $({\rm W}3)$.

\noindent $({\rm W}3) \Rightarrow ({\rm W}2)$. Using $\max_{0 \leq k \leq n}\,(S_k + \eta_{k + 1})\geq S_n + \eta_{n + 1}$ a.s., we infer, for any $\varepsilon\in (0,\mu)$,
\begin{multline*}
\mmp\{\max_{0 \leq k \leq n}\,(S_k +\eta_{k+1})< (\mu -\varepsilon)n\}\leq \mmp\{S_n+\eta_{n+1}< (\mu-\varepsilon)n\} \\ \leq \mmp\{S_n< (\mu - \varepsilon/2)n\}+\mmp\{\eta_{n+1}< -\varepsilon n/2\}.
\end{multline*}
While the first term converges to $0$ as $n\to\infty$ by the weak law of large numbers for $(S_n)$, the second term does so trivially. Thus,
\begin{equation*}
\lim_{n\to\infty}\mmp\{\max_{0 \leq k \leq n}\,(S_k + \eta_{k + 1})<(\mu - \varepsilon)n\}= 0.
\end{equation*}
Observe that this limit relation holds true irrespective of $({\rm W}3)$.

Left with proving that, for any $\varepsilon>0$,
\begin{equation}\label{eq:prob_lower_bound}
\lim_{n\to\infty}\mmp\{\max_{0\leq k\leq n}\,(S_k+\eta_{k+1})>(\mu+\varepsilon)n\}= 0,
\end{equation}
we first note that (W3) is equivalent to $\lim_{t\to\infty}t|\log\mmp\{\eta\leq t\}|=0$, and that the latter ensures $n^{-1}\max_{1\leq k\leq n}\,\eta_k^+ \overset{\mmp}{\to} 0$ as $n\to\infty$, where $\overset{\mmp}{\to}$ denotes convergence in probability. Indeed, for any $\delta>0$, $$\mmp\{\max_{1\leq k\leq n}\,\eta_k^+>\delta n\}=1-\exp(-n|\log \mmp\{\eta\leq \delta n\}|)~\to~ 0,\quad n\to\infty.$$
It is known (see, for instance, Theorem 12.1 on p.~75 in \cite{Gut:2009}) that $\lim_{n\to\infty} n^{-1}\max_{0\leq k\leq n}\,S_k=\mu$ a.s.\ and thereupon $\lim_{n\to\infty}n^{-1}(\max_{0\leq k\leq n}\,S_k+\max_{1\leq k\leq n}\,\eta_k^+)=
\mu$ in probability. With this at hand, relation \eqref{eq:prob_lower_bound} follows from $\max_{0\leq k\leq n}\,(S_k+\eta_{k+1})\leq \max_{0\leq k\leq n}\,S_k+\max_{1\leq k\leq n + 1}\,\eta_k^+$ a.s. The proof of the implication $({\rm W}3) \Rightarrow ({\rm W}2)$ is complete.

Although the equivalence $({\rm W}1)\Leftrightarrow ({\rm W}2)$ is trivial, we prove for completeness one implication.

\noindent $({\rm W}2)\Rightarrow ({\rm W}1)$. Fix any $\varepsilon>0$, put $\delta:=\mu^{-1}(\mu+\varepsilon)^{-1}\varepsilon$ and observe that as $\varepsilon$ runs over $(0,\infty)$, $\delta$ sweeps out the interval $(0,\mu^{-1})$. For $n\in\mn_0:=\{0,1,2,\ldots\}$, put $s_n:=(\mu+\varepsilon)n$. According to $({\rm W}2)$,
\begin{multline*}
\mmp\{\tau(s_n)\leq (\mu^{-1}-\delta)s_{n+1}\}=\mmp\{\tau(s_n)\leq (\mu+\varepsilon)^{-1}s_{n+1}\}\\=\mmp\{\max_{
1\leq k\leq n+1}\, 
T_k>(\mu+\varepsilon)n\}~\to~0,\quad n\to\infty.
\end{multline*}
Given $t\geq 0$ there exists $n\in\mn_0$ such that $t\in [s_n, s_{n+1})$. Using this we infer $$\mmp\{\tau(t)\leq (\mu^{-1}-\delta)t\}\leq \mmp\{\tau(s_n)\leq (\mu+\varepsilon)^{-1}s_{n+1}\},$$ thereby proving that, for any $\delta\in (0,\mu^{-1})$, $\lim_{t\to\infty} \mmp\{\tau(t)\leq (\mu^{-1}-\delta)t\}=0$.  The relation $\lim_{t\to\infty} \mmp\{\tau(t)>(\mu^{-1}+\delta)t\}=0$, for any $\delta>0$, can be proved analogously. We omit further details.

\subsection{Proof of Theorem \ref{v_t}}\label{proof_v_t}

As we have already mentioned, the implication $({\rm S}3)\Rightarrow ({\rm S}1)$ was proven in Lemma 4.2 of \cite{Basrak etal:2022}.

\noindent $({\rm S}1)\Rightarrow ({\rm S}2)$. We shall use an alternative representation
\begin{equation}\label{eq:alternative}
\tau(t)=\inf\{n\in\mn: \max_{1\leq j\leq n}\,T_j>t\},\quad t\in\mr.
\end{equation}
By Lemma \ref{lem:div}, the assumption $\mu\in (0,\infty)$ ensures that $\lim_{n\to\infty}\max_{1\leq j\leq n}\,T_j=+\infty$ a.s. Since $\tau(-1/n+\max_{1\leq j\leq n}\,T_j)\leq n$ a.s.\ for each $n\in\mn$
we obtain
\begin{equation*}
\limsup_{n\to\infty}\frac{\max_{1\leq j\leq n}\,T_j}{n}\leq \limsup_{n\to\infty}\frac{\max_{1\leq j\leq n}\,T_j}{\tau(-1/n+\max_{1\leq j\leq n}\,T_j)}=\mu\quad\text{a.s.}
\end{equation*}
having utilized (S1). On the other hand, $\tau(\max_{1\leq j\leq n}\,T_j)=n+1+I_n \geq n+1$ a.s.\ for each $n\in\mn$, where $I_n:=\#\{k\in\mn: \max_{1\leq j\leq n+k}\,T_j=\max_{1\leq j\leq n}\,T_j \}$, whence
\begin{equation*}
\liminf_{n\to\infty}\frac{\max_{1\leq j\leq n}\,T_j}{n+1}\geq \liminf_{n\to\infty}  
\frac{\max_{1\leq j\leq n}\,T_j}{\tau(\max_{1\leq j\leq n}\,T_j)}=\mu\quad\text{a.s.}
\end{equation*}
by another appeal to (S1).

\noindent $({\rm S}2)\Rightarrow ({\rm S}3)$. Start with
\begin{equation*}
\limsup_{n\to\infty}\frac{T_n}{n} \leq \limsup_{n\to\infty}\frac{\max_{1\leq j\leq n}\,T_j}{n}=\mu \quad\text{a.s.}
\end{equation*}
This in combination with the strong law of large numbers for standard random walks entails $\limsup_{n\to\infty}n^{-1}\eta_n \leq 0$ a.s. With this at hand, an application of the converse part of the Borel-Cantelli lemma yields $\sum_{n\geq 1} \mmp\{\eta_n>\varepsilon n\} < \infty$ for all $\varepsilon>0$. This ensures $\me[\eta^+]<\infty$, as desired.

\subsection{Proof of Theorem \ref{N_t}}\label{proof_N_t}

By Proposition \ref{assert:nrho}, the a.s.\ finiteness of $N(t)$ or $\rho(t)$ for at least one deterministic $t$
is equivalent to $\me[\eta^-]<\infty$. According to \eqref{eq:ineq}, for each $t\in\mr$, $N(t)\leq \rho(t)$. Hence, it suffices to prove that
\begin{equation}\label{eq:inter}
\limsup_{t\to\infty} \frac{\rho(t)}{t}\leq\frac{1}{\mu}\leq \liminf_{t\to\infty} \frac{N(t)}{t}\quad\text{a.s.}
\end{equation}
To this end, put $\hat T_n:=S_{n-1}-\eta^-_n$ for $n\in\mn$ and $\hat \rho(t):=\sup\{n\geq 1: \hat T_n \leq t\}$ for $t\in\mr$. Since $\me [(-\eta^-)^-]=\me[\eta^-]<\infty$, we infer $\hat \rho(t)<\infty$ a.s.\ for all $t\in\mr$.  Further, $\hat T_n \leq T_n$ a.s.\ for $n\in\mn$, whence $\rho(t)\leq \hat\rho(t)$ a.s.\ for $t\in\mr$. Also, for each $t\in\mr$, $\hat T_{\hat \rho(t)} \leq t$ a.s.\ on $\{
\hat \rho(t)<\infty\}$. By the strong law of large numbers for standard random walks $\lim_{n\to\infty}(S_n/n)=\mu$ a.s. In view of $\me[\eta^-]<\infty$ and the Borel-Cantelli lemma, $\lim_{n\to\infty}(\eta_n^-/n)=0$ a.s. As a consequence, $\lim_{n\to\infty}(\hat T_n/n)=\mu$ a.s.\ and thereupon $\lim_{t\to\infty}(\hat T_{\hat \rho(t)}/\hat \rho(t))=\mu$ a.s.\ because $\lim_{t\to\infty}\hat \rho(t)=+\infty$ a.s. Indeed, by monotonicity, the a.s.\ limit $\lim_{t\to\infty}\hat \rho(t)$ exists, finite or infinite. In view of $\hat T_{\hat \rho(t)+1}>t$ a.s.\ on $\{\hat\rho(t)<\infty\}$, the limit cannot be finite. Combining pieces together we conclude that
\begin{equation*}
\limsup_{t\to\infty}\frac{\rho(t)}{t}\leq \limsup_{t\to\infty}\frac{\hat \rho(t)}{t} \leq \lim_{t\to\infty} \frac{\hat \rho(t)}{\hat T_{\hat \rho(t)}}=\frac{1}{\mu}\quad\text{a.s.}
\end{equation*}
This proves the left-hand inequality in \eqref{eq:inter}.

Recall that $\rho^\ast(t)=\sup\{k\geq 0: S_k \leq t\}$ for $t\in\mr$ and write, for any fixed $y>0$,
\begin{multline*}
N(t)=\sum_{k\geq 1}\1_{\{T_k\leq t\}}\geq \sum_{k=1}^{\rho^\ast(t)+1}\1_{\{T_k\leq t\}}\geq
	\sum_{k=1}^{\rho^\ast(t)+1}\1_{\{S_{k-1}\leq t-y\}}-\sum_{k=1}^{\rho^\ast(t)+1}\1_{\{\eta_k>y\}}\\=\sum_{k\geq 0}\1_{\{S_k\leq t-y\}}-\sum_{k=1}^{\rho^\ast(t)+1}\1_{\{\eta_k>y\}},\quad t\in\mr\quad\text{a.s.}
\end{multline*}
Here, we have used the inclusion $\{S_{k-1}\leq t-y\}\subseteq \{S_{k-1}+\eta_k\leq t\}\cup \{\eta_k>y\}$ for $k\in\mn$.
By Proposition \ref{prop:slln},
\begin{equation}\label{eq:inter1}
\lim_{t\to\infty}\frac{\sum_{k\geq 0}\1_{\{S_k\leq t-y\}}}{t}=\frac{1}{\mu}\quad\text{a.s.}	
\end{equation}
By the strong law of large numbers for standard random walks, $\lim_{n\to\infty}n^{-1}\sum_{k=1}^n\1_{\{\eta_k>y\}}=\mmp\{\eta>y\}$ a.s. Since $\lim_{t\to\infty}\rho^\ast(t)=+\infty$ a.s., we infer $\lim_{t\to\infty}(\rho^\ast(t)+1)^{-1}\sum_{k=1}^{\rho^\ast(t)+1}\1_{\{\eta_k>y\}}=\mmp\{\eta>y\}$ a.s. This in combination with $\lim_{t\to\infty}t^{-1}\rho^\ast(t)=\mu^{-1}$ a.s. (see Proposition \ref{prop:slln}) proves
\begin{equation}\label{eq:inter2}
\lim_{t\to\infty}\frac{\sum_{k=1}^{\rho^\ast(t)+1}\1_{\{\eta_k>y\}}}{t}=\lim_{t\to\infty}\frac{\sum_{k=1}^{\rho^\ast(t)+1}\1_{\{\eta_k>y\}}}{\rho^\ast(t)+1}\frac{\rho^\ast(t)+1}{t}= \frac{\mmp\{\eta>y\}}{\mu}\quad\text{a.s.}
\end{equation}
Invoking \eqref{eq:inter1} and \eqref{eq:inter2} yields
\begin{equation*}
\liminf_{t\to\infty} \frac{N(t)}{t}\geq \lim_{t\to\infty} \frac{\sum_{k\geq 0}\1_{\{S_k\leq t-y\}}}{t}-\lim_{t\to\infty} \frac{\sum_{k=1}^{\rho^\ast(t)+1}\1_{\{\eta_k>y\}}}{t}=\frac{\mmp\{\eta\leq y\}}{\mu}\quad\text{a.s.}
\end{equation*}
Letting now $y\to\infty$ we arrive at the right-hand inequality in \eqref{eq:inter}.

\section{Proofs of the functional limit theorems}

\subsection{Proof of Theorem \ref{thm1}}\label{proof_thm1}
By Theorem 1.3.15 in \cite{Iksanov:2016},
\begin{equation*}
(t^{-1}\max_{1\leq k \leq \lfloor ut\rfloor+1}\,T_k)_{u\geq 0}~ \Longrightarrow~ \big(\sup_{k:\, t_k^{(c,\,1)}\leq u}\,(\mu t_k^{(c,\,1)}+j_k^{(c,\,1)})\big)_{u\geq 0}=:(R(u))_{u\geq 0},\quad t\to\infty
\end{equation*}
in the $J_1$-topology on $D$. Since the first passage time functional is continuous in the $M_1$-topology (see, for instance, Lemma on p.~419 in \cite{Whitt:1971}), we conclude that
\begin{multline*}
(t^{-1}(\tau(ut)-1))_{u\geq 0}=\big(\inf\{z\geq 0: \max_{1\leq k\leq \lfloor zt \rfloor+1} T_k >ut\}\big)_{u\geq 0}\\~\Longrightarrow~ \big(\inf\big\{z \geq 0: \sup_{t_k^{(c,\,1)}\leq z}(\mu t_k^{(c,\,1)} + j_k^{(c,\,1)}) >u\big\}\big)_{u\geq 0}=(X(u))_{u\geq 0},\quad t\to\infty
\end{multline*}
in the $M_1$-topology on $D$.

For $y,u\geq 0$, put $F(y, u):=\mmp\{R(y)\leq u\}$. An explicit formula for $F$ is given in Remark 1.3.16 of \cite{Iksanov:2016}. Since $X$ is a nonnegative process, we conclude that $\mmp\{X(u)\leq y\}=0$ for $y<0$. For $y\geq 0$, formula \eqref{eq:marginal} is a consequence of $\mmp\{X(u)\leq y\}=1-F(y, u)$.

\subsection{Proof of Theorem \ref{thm2}}\label{proof_thm2}

Let $a$ be any positive function satisfying $\lim_{x\to\infty}x\mmp\{\eta>a(x)\}=1$. It is a standard fact (see, for instance, Lemma 6.1.3 in \cite{Iksanov:2016}) that $a$ is regularly varying at $\infty$ of index $1/\alpha$. In particular,
\begin{equation}\label{eq:in1}
\lim_{x\to\infty}\frac{a(x)}{x}=+\infty.
\end{equation}
By Theorem 1.8.3 in \cite{Bingham+Goldie+Teugels:1989}, there exists a continuous and strictly increasing function $b$ satisfying $b(x)\sim a(x)$ as $x\to\infty$. As a consequence, $\lim_{x\to\infty}x\mmp\{\eta>b(x)\}=1$. Thus, without loss of generality  we can and do assume that $a$ is continuous and strictly increasing.

We claim that
\begin{equation}\label{eq:conv}
\Big(\frac{\max_{1\leq k\leq \lfloor ut\rfloor+1}T_k}{a(t)}\Big)_{u\geq 0}~ \Longrightarrow~ \big(\sup_{k:\,t_k^{(1,\, \alpha)}\leq u}\,j_k^{(1,\, \alpha)}\big)_{u\geq 0},\quad t\to\infty
\end{equation}
in the $J_1$-topology on $D$. This follows along the lines of the proof of Proposition 1.3.13 (ii) in \cite{Iksanov:2016} provided that we can show that $$\Big(\frac{S_{\lfloor ut\rfloor}}{a(t)}\Big)_{u\geq 0}~\Longrightarrow~(\Theta(u))_{u\geq 0},\quad t\to\infty$$ in the $J_1$-topology on $D$, where $\Theta(u):=0$ for $u\geq 0$. The latter is an immediate consequence of \eqref{eq:in1} and the functional law of large numbers $$\Big(\frac{S_{\lfloor ut\rfloor}}{t}\Big)_{u\geq 0}~\Rightarrow~(I(u))_{u\geq 0},\quad t\to\infty$$ in the $J_1$-topology on $D$, where $I(u):=\mu u$ for $u\geq 0$.

With \eqref{eq:conv} at hand, invoking once again continuity of the first passage time functional in the $M_1$-topology we infer
\begin{multline}
\Big(\frac{\tau(ua(t))-1}{t}\Big)_{u\geq 0}=\big(\inf\{z\geq 0: \max_{1\leq k\leq \lfloor zt\rfloor+1}\,T_k > ua(t)\}\big)_{u\geq 0}\\~ \Longrightarrow~ \big(\inf\{z\geq 0:\sup_{k:\,t_k^{(1,\,\alpha)}\leq z}\,j_k^{(1,\,\alpha)}>u\}\big)_{u\geq 0},\quad t\to\infty \label{eq:in2}
\end{multline}
in the $M_1$-topology on $D$. Let $a^{-1}$ be the inverse function of $a$. Observe that $a$ is asymptotically generalized inverse of $x\mapsto (\mmp\{
\eta>x\})^{-1}$. Hence, $a^{-1}(x)\sim (\mmp\{
\eta>x\})^{-1}$ as $x\to\infty$. Substituting now $a^{-1}(t)$ in place of $t$ on the left-hand side of \eqref{eq:in2} we arrive at \eqref{eq:inverseextremal}.

Finally, we point out the marginal distributions of the limit process $Y$: for $x\geq 0$ and $u>0$,
\begin{multline*}
\mmp\{Y(u)>x\}=\mmp\big\{\inf\{z\geq 0:\,\sup_{k:\,t_k^{(1,\,\alpha)}\leq z}\,j_k^{(1,\,\alpha)}>u\big\}> x\}=\mmp\big\{\sup_{k:\,t_k^{(1,\, \alpha)}\leq x}\,j_k^{(1,\,\alpha)}\leq u\big\}\\=\mmp\big\{N^{(1,\,\alpha)}((t,y): t\leq x, y>u)=0\big\}=\exp\big(-\me N^{(1,\,\alpha)}((t,y): t\leq x, y>u)\big)=\eee^{-u^{-\alpha}x}.
\end{multline*}

\subsection{Proof of Theorem \ref{thm4}}\label{proof_thm4}

Under the assumptions $\mu\in(-\infty, +\infty)$ and $\sigma^2\in (0,\infty)$, a specialization of Donsker's theorem to
finite-dimensional distributions yields
\begin{equation}\label{donsker}
\Big(\frac{S_{\lfloor ut\rfloor} - \mu ut}{\sigma t^{1/2}}\Big)_{u\geq 0}~\fdc~ \big(B(u)\big)_{u\geq0}, \quad t\to\infty.
\end{equation}

Assume that $\mu\in(0, \infty)$, $\sigma^2\in (0,\infty)$ and $\me[\eta^+]<\infty$. We start by proving that
\begin{equation}\label{eq:inter4}
\Big(\frac{\max_{1\leq j\leq \lfloor ut\rfloor+1}\,T_j-\mu ut}{\sigma t^{1/2}}\Big)_{u\geq0}~\fdc~ \big(B(u)\big)_{u\geq 0}, \quad t\to\infty.
\end{equation}
For $u,t\geq 0$,
\begin{equation*}
\max_{
1\leq k \leq \lfloor ut\rfloor+1}\,T_k = S_{\lfloor ut\rfloor} + \max(\eta_{\lfloor ut\rfloor + 1}, \eta_{\lfloor ut\rfloor} - \xi_{\lfloor ut\rfloor}, \ldots, \eta_1 - S_{\lfloor ut\rfloor})\quad\text{a.s.}
\end{equation*}
In view of \eqref{donsker} it is enough to prove that
\begin{equation*}
\Big(\frac{\max(\eta_{\lfloor ut\rfloor + 1}, \eta_{\lfloor ut\rfloor} - \xi_{\lfloor ut\rfloor}, \ldots, \eta_1 - S_{\lfloor ut\rfloor})}{t^{1/2}}\Big)_{u\geq0}~\fdc~ \big(\Theta(u)\big)_{u\geq0}, \quad t\to\infty,
\end{equation*}
where $\Theta(u)=0$ for $u \geq 0$. According to the Cram\'{e}r-Wold device, this task is equivalent to showing that, for all $k\in\mn$, any $\lambda_1, \ldots, \lambda_k \in \mathbb{R}$ and any nonnegative $u_1, \ldots, u_k$,
\begin{equation*}
\lim_{t\to\infty} \sum_{i=1}^k \lambda_i \frac{\max(\eta_{\lfloor u_i t\rfloor + 1}, \eta_{\lfloor u_it\rfloor} - \xi_{\lfloor u_it\rfloor}, \ldots, \eta_1 - S_{\lfloor u_it\rfloor})}{t^{1/2}}=0\quad\text{in probability}.
\end{equation*}
Plainly, this is a consequence of
\begin{equation}\label{eq:inter3}
\lim_{t\to\infty}\frac{\max(\eta_{\lfloor u_0 t\rfloor + 1}, \eta_{\lfloor u_0t\rfloor} - \xi_{\lfloor u_0t\rfloor}, \ldots, \eta_1 - S_{\lfloor u_0t\rfloor})}{t^{1/2}}=0\quad\text{in probability},
\end{equation}
where $u_0\geq 0$ is fixed. If $u_0=0$, then \eqref{eq:inter3} trivially holds. Thus, we assume in what follows that $u_0>0$.

\noindent {\sc Proof of \eqref{eq:inter3}}. Observe that, for all $\varepsilon>0$,
\begin{multline*}
\mmp\big\{\max(\eta_{\lfloor u_0t\rfloor + 1}, \eta_{\lfloor u_0t\rfloor} - \xi_{\lfloor u_0t\rfloor}, \ldots, \eta_1 - S_{\lfloor u_0t\rfloor})< -\varepsilon t^{1/2}\big\} \leq \mmp\{\eta_{\lfloor u_0t\rfloor + 1}<-\varepsilon t^{1/2}\}\\=\mmp\{\eta<-\varepsilon t^{1/2}\}~\to~ 0,\quad t\to\infty.
\end{multline*}
To proceed, let $u_0t\geq 1$ and $\eta_0$ be a copy of $\eta$ which is independent of $(\xi_1,\eta_1)$, $(\xi_2, \eta_2),\ldots$. The maximum in \eqref{eq:inter3} has the same distribution as $$\max(\eta_0, \eta_1- \xi_1, \ldots, \eta_{\lfloor u_0t\rfloor} - S_{\lfloor u_0t\rfloor})=\max (\eta_0, \max_{1\leq k\leq \lfloor u_0 t\rfloor }\,\hat T_k),$$ where $(\hat T_n)_{n\geq 1}$ is a globally perturbed random walk generated by $(-\xi, \eta-\xi)$. Since $\me [-\xi]\in (-\infty, 0)$ and $\me [(\eta-\xi)^+]\leq \me [\eta^+]+\me [\xi^-]<\infty$ we infer $\lim_{n\to\infty}\hat T_n=-\infty$ a.s.\ by (a mirror version of) Proposition \ref{assert:nrho}. This ensures that the variable $\max (\eta_0, \max_{k\geq 1}\,\hat T_k)$ is a.s.\ finite, whence, for all $\varepsilon>0$,
\begin{equation*}
\mmp\big\{\max(\eta_{\lfloor u_0t\rfloor + 1}, \eta_{\lfloor u_0t\rfloor} - \xi_{\lfloor u_0t\rfloor}, \ldots, \eta_1 - S_{\lfloor u_0t\rfloor}) > \varepsilon t^{1/2}\big\} \leq \mmp\big\{\max (\eta_0, \max_{k\geq 1}\,\hat T_k)>\varepsilon t^{1/2}\big\}~ \to~ 0
\end{equation*}
as $t\to\infty$, thereby completing the proof of \eqref{eq:inter3} and \eqref{eq:inter4}.

We are ready to prove \eqref{eq:tau}. To this end, we have to show that for all $k\in\mn$, any nonnegative $u_1, \ldots, u_k$ and any real $x_1, \ldots, x_k$,
\begin{equation}\label{eq:inter5}
\lim_{t\to\infty}\mmp\Big\{\frac{\tau(u_1t) - \mu^{-1}u_1 t}{(\sigma^2\mu^{-3}t)^{1/2}} > x_1, \ldots, \frac{\tau(u_k t) - \mu^{-1}u_k t}{(\sigma^2\mu^{-3}t)^{1/2}} > x_k\Big\} = \mmp\{B(u_1) > x_1, \ldots, B(u_k) > x_k\}.
\end{equation}
If $u_i=0$, then $(\tau(u_it) - \mu^{-1}u_i t)(\sigma^2\mu^{-3}t)^{-1/2}=\tau(0)(\sigma^2 \mu^{-3}t)^{-1/2}$ a.s., and, as $t\to\infty$, this converges to $0$. Since $B(u_i)=B(0)=0$ a.s., we assume in what follows that $u_1,\ldots, u_k$ are positive. As a preparation, for fixed real $z_1,z_2$, $z_2\neq 0$, put $\ell(t,z_1,z_2):= \mu^{-1}t+\frac{z_1 (\sigma^2\mu^{-3}t)^{1/2}-1}{z_2}$ and observe that
\begin{equation}\label{supp_k_limit}
\frac{t-\mu \ell(t, z_1,z_2)}{(\sigma^2\ell(t, z_1,z_2))^{1/2}} = -\frac{z_1}{z_2}\Big(\frac{t}{t + (z_1(\sigma^2\mu^{-1}t)^{1/2}-\mu)z_2^{-1}
}\Big)^{1/2}+o(1)~ \to~ -\frac{z_1}{z_2}, \quad t\to\infty.
\end{equation}
For $t$ so large that $u_i \ell(t, x_i, u_i)\geq 1$ for $1\leq i\leq k$, the probability on the left-hand side of \eqref{eq:inter5} is equal to
\begin{multline*}
\mmp\{\tau(u_1 t) > u_1\ell(t, x_1, u_1)+1, \ldots, \tau(u_k t) > u_k\ell(t, x_k, u_k)+1\}\\=
\mmp\Big\{\bigcap_{i=1}^k \Big\{\frac{\max_{1\leq j \leq \lfloor u_i \ell(t, x_i, u_i)\rfloor+1}\,T_j - \mu u_i \ell(t, x_i, u_i)}{\sigma (\ell(t, x_i, u_i))^{1/2}} \leq \frac{u_i(t - \mu \ell(t, x_i, u_i))}{\sigma(\ell(t, x_i, u_i))^{1/2}}\Big\}\Big\}.
\end{multline*}
We have used representation \eqref{eq:alternative} for the equality. In view of \eqref{eq:inter4} and \eqref{supp_k_limit}, as $t\to\infty$, the last probability converges to $\mmp\{B(u_1) \leq -x_1, \ldots, B(u_k) \leq -x_k\}=\mmp\{B(u_1) > x_1, \ldots, B(u_k) > x_k\}$. The equality is justified by the fact that $B(u_i)$ has the same continuous distribution as $-B(u_i)$. The proof of \eqref{eq:tau} is complete.

Assume now that $\mu\in(0, \infty)$, $\sigma^2\in (0,\infty)$ and $\me[\eta^-]<\infty$. Our proof of \eqref{eq:rho} is similar to that of \eqref{eq:tau}. Hence, we only give a sketch. We claim that
\begin{equation}\label{inf_flt}
\Big(\frac{\inf_{k\geq \lfloor ut\rfloor+1}\,T_k - \mu ut}{\sigma t^{1/2}}\Big)_{u\geq 0}~\fdc~\big(B(u)\big)_{u\geq0}, \quad t\to\infty.
\end{equation}
Observe that, for $n\geq 0$, $\inf_{k \geq n+1}\,T_k= S_n+\inf\{\eta_{n+1}, \eta_{n+2}+\xi_{n+1}, \eta_{n+3}+\xi_{n+1}+\xi_{n+2},\ldots\}$, and the latter infimum has the same distribution as $\inf_{k \geq 1}\,T_k$. According to Proposition \ref{assert:nrho}, the a.s. finiteness of the infimum is secured by the assumptions $\mu\in(0, \infty)$ and $\me[\eta^-]<\infty$. Now \eqref{inf_flt} follows from the latter equality and \eqref{donsker}.

To complete the proof of \eqref{eq:rho} we have to check that for all $k\in\mn$, any nonnegative $u_1, \ldots, u_k$ and any real $x_1, \ldots, x_k$,
\begin{equation}\label{eq:rho1}
\lim_{t\to\infty}\mmp\Big\{\frac{\rho(u_1t) - \mu^{-1}u_1 t}{(\sigma^2\mu^{-3}t)^{1/2}} > x_1, \ldots, \frac{\rho(u_k t) - \mu^{-1}u_k t}{(\sigma^2\mu^{-3}t)^{1/2}} >
x_k\Big\} = \mmp\{B(u_1) > x_1, \ldots, B(u_k) > x_k\}.
\end{equation}
For $t$ so large that $\ell(t, x_i, u_i)\geq 0$ for $1\leq i\leq k$, the probability on the left-hand side of \eqref{eq:rho1} is equal to
\begin{multline*}
\mmp\{\rho(u_1 t) > u_1\ell(t, x_1, u_1), \ldots, \rho(u_k t) > u_k\ell(t, x_k, u_k)\}\\=\mmp\Big\{\bigcap_{i=1}^k \Big\{\frac{\inf_{j \geq \lfloor u_i \ell(t, x_i, u_i)\rfloor+1}\,T_j - \mu u_i \ell(t, x_i, u_i)}{\sigma(\ell(t, x_i, u_i))^{1/2}} \leq \frac{u_i(t - \mu \ell(t, x_i, u_i))}{\sigma(\ell(t, x_i, u_i))^{1/2}}\Big\}\Big\}.
\end{multline*}
Arguing along the lines of the proof of \eqref{eq:tau}, but using \eqref{inf_flt} in place of \eqref{eq:inter4} we arrive at \eqref{eq:rho1}. The proof of \eqref{eq:rho} is complete.

Finally, we assume that $\mu\in(0, \infty)$, $\sigma^2\in (0,\infty)$ and $\me[\eta]\in (-\infty,+\infty)$. Formula \eqref{eq:mix} follows from the already proved relations \eqref{eq:tau} and \eqref{eq:rho} in combination with the following simple observation. If for each $t$ large enough and each $u\geq 0$, $X_t(u)\leq Y_t(u)\leq Z_t(u)$, $(X_t(u))_{u\geq 0}\fdc (X(u))_{u\geq 0}$ and $(Z_t(u))_{u\geq 0}\fdc (X(u))_{u\geq 0}$ as $t\to\infty$, then $$((X_t(u))_{u\geq 0}, (Y_t(u))_{u\geq 0}, (Z_t(u))_{u\geq 0})~\fdc~((X(u))_{u\geq 0}, (X(u))_{u\geq 0}, (X(u))_{u\geq 0}),\quad t\to\infty.$$ We use this fact with $X_t(u)=\tau(ut)-1$, $Y_t(u)=N(ut)$ and $Z_t(u)=\rho(ut)$. The proof of Theorem \ref{thm4} is complete.

\subsection{Proof of Theorem \ref{thm5}}\label{proof_thm5}

Let $\mathbb{Z}$ denote the set of integers. Recall that a function $f: \mr\to [0,\infty)$ is called {\it directly Riemann integrable} (dRi) on $\mr$, if

\noindent (a) $\bar \sigma(h)<\infty$ for each $h>0$ and

\noindent (b) $\lim_{h\to 0+}(\bar \sigma(h)-\ubar \sigma (h))= 0$, where
$$\bar \sigma(h):=h \sum_{k\in\mathbb{Z}}\sup_{(k-1)h\leq y<kh}\,f(y)\quad\text{and}\quad \ubar\sigma(h):=h \sum_{k\in\mathbb{Z}}\inf_{(k-1)h\leq y<kh}\,f(y).$$ A function $f: \mr\to \mr$ is called dRi on $\mr$, if so are $f^+$ and $f^-$, where $f^+(t)=\max (f(t),0)$ and $f^-(t)=\max (-f(t), 0)$ for $t\in\mr$.

We need a couple of auxiliary results.
\begin{lemma}\label{lem:gen_shot_noice_process}
Assume that $\me [\xi]\in (0,\infty)$ and let $f:\mr\to\mr$ be a dRi function. Then the function $t\to\me\big[\sum_{k\geq0}f(t-S_k)\big]$ is bounded.
\end{lemma}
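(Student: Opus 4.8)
The plan is to reduce the boundedness of $t\mapsto \me\big[\sum_{k\geq 0}f(t-S_k)\big]$ to the renewal-theoretic fact that the renewal function $U(t):=\me[N^\ast(t)]=\me\big[\sum_{k\geq 0}\1_{\{S_k\leq t\}}\big]$ grows at most linearly, together with the defining estimates for direct Riemann integrability. By splitting $f=f^+-f^-$ it suffices to treat $f\geq 0$, since $\me\big[\sum_k f(t-S_k)\big]=\me\big[\sum_k f^+(t-S_k)\big]-\me\big[\sum_k f^-(t-S_k)\big]$ and boundedness of both summands on the right gives boundedness of the difference.

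So assume $f\geq 0$ is dRi. First I would fix $h=1$ (any fixed $h>0$ works) and dominate $f$ by the step function obtained from its suprema over unit blocks: with $g(y):=\sup_{k-1\leq z<k}f(z)$ for $y\in[k-1,k)$, $k\in\mathbb Z$, we have $f\leq g$ pointwise and $\sum_{k\in\mathbb Z}\sup_{k-1\leq z<k}f(z)=\bar\sigma(1)<\infty$ by dRi property (a). Hence
\begin{equation*}
\me\Big[\sum_{k\geq 0}f(t-S_k)\Big]\leq \me\Big[\sum_{k\geq 0}g(t-S_k)\Big]=\sum_{m\in\mathbb Z}\Big(\sup_{m-1\leq z<m}f(z)\Big)\,\me\Big[\sum_{k\geq 0}\1_{\{t-S_k\in[m-1,\,m)\}}\Big].
\end{equation*}
The inner expectation equals $U(t-m+1)-U(t-m)$ in the usual renewal notation (with $U(x)=0$ for $x<0$), and the key input is Lorden-type control: since $\me[\xi]=\mu\in(0,\infty)$, there is a constant $C$ with $U(b)-U(a)\leq C(b-a+1)$ for all $a\leq b$, so each increment $U(t-m+1)-U(t-m)\leq C\cdot 2=:C'$ uniformly in $t$ and $m$. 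Therefore the whole sum is bounded by $C'\bar\sigma(1)<\infty$, uniformly in $t$, which is exactly the claim.

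The only genuinely delicate point is the uniform bound $U(t-m+1)-U(t-m)\leq C'$ on one-step increments of the renewal function over all of $\mathbb R$, including small or negative arguments; for $t-m$ near $0$ one must also use $U(0)<\infty$, which holds because $\me[\xi^-]<\infty$ (indeed $\xi>0$ eventually in distribution since $\mu>0$ forces the ladder structure to be proper). I would cite the standard elementary renewal theorem / Lorden's inequality — $U(b)-U(a)\le C(b-a+1)$ for some $C=C(\mu,\mathrm{law}(\xi))$ and all $a\le b$ — as available from the renewal-theory literature (e.g.\ \cite{Gut:2009}); this is the workhorse and the rest is just interchanging sum and expectation via Tonelli (legitimate since all terms are nonnegative) and invoking dRi property (a). The matching lower estimate with $\inf$ in place of $\sup$ is not even needed here, since only an upper bound is required for boundedness; dRi property (b) will enter elsewhere in the proof of Theorem~\ref{thm5}, not in this lemma.
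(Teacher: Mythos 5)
Your argument is sound in outline and follows the same basic strategy as the paper: reduce to $f\geq 0$, dominate $f$ by the step function built from $\bar\sigma(1)$ suprema over unit blocks, interchange sum and expectation by Tonelli, and then invoke a uniform bound on the occupation measure of unit intervals. The substantive difference is how that uniform bound is obtained. You cite ``Lorden-type control'' $U(b)-U(a)\leq C(b-a+1)$, which for a random walk with real-valued (not nonnegative) increments is not literally Lorden's inequality as usually stated; the paper instead proves the needed bound from scratch. It uses transience of $(S_k)$ (a consequence of $\mu>0$) to get $\me\big[\sum_{k\geq 0}\1_{\{S_k\in(-s,s)\}}\big]<\infty$, and then a stopping-time/strong-Markov argument to obtain the uniform-in-$t$ bound $\me\big[\sum_{k\geq 0}\1_{\{S_k\in(t,t+s]\}}\big]\leq\me\big[\sum_{k\geq 0}\1_{\{S_k\in(-s,s)\}}\big]$, which is exactly your one-step increment bound with a clean constant. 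The authors make a point of this: they explicitly prefer a proof that avoids both the arithmetic/nonarithmetic case split and the reduction to the ladder-height renewal process implicit in the renewal-theory results you would be citing. So your route is a valid shortcut at the price of outsourcing the key estimate, whereas the paper's route is self-contained and slightly more elementary.

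Two small inaccuracies in your write-up worth flagging. First, the parenthetical ``$U(x)=0$ for $x<0$'' is false here: the increments $\xi$ are not assumed nonnegative, so $S_k$ can be negative and $U(x)=\me[N^\ast(x)]>0$ for negative $x$ in general; what is true (and what you actually use) is that $U(x)<\infty$ for all $x$, which again comes from transience. Second, the justification ``$\xi>0$ eventually in distribution since $\mu>0$'' is not a real argument; the correct statement is simply that $\mu\in(0,\infty)$ makes the walk transient (drifting to $+\infty$), so every compact set is visited finitely often in expectation. Neither slip affects the conclusion, but they should be cleaned up if you want the proof to stand without the Lorden citation.
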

Let $d>0$. Recall that the distribution of a real-valued random
variable $\theta$ is called {\it $d$-arithmetic} if it is concentrated on the lattice $(nd)_{n\in\mathbb{Z}}$ and not concentrated on $(nd_1)_{n\in\mathbb{Z}}$ for some $d_1>d$. The distribution is called {\it nonarithmetic}, if it is not $d$-arithmetic for any $d>0$. If in addition to the assumptions of Lemma \ref{lem:gen_shot_noice_process}, the distribution of $\xi$ is nonarithmetic, then, by Theorem 4.2 in \cite{Athreya+McDonald+Ney:1978}, $$\lim_{t\to\infty}\me \Big[\sum_{k\geq 0}f(t-S_k)\Big]=\mu^{-1}\int_\mr f(y){\rm d}y.$$ If $\me [\xi]\in (0,\infty)$, the distribution of $\xi$ is $d$-arithmetic for some $d>0$, and the series $\sum_{j\in\mathbb{Z}}f(jd)$ converges, then, by Proposition 2.1 in \cite{Iksanov+Polotsky:2016}, $$\lim_{n\to\infty}\me \Big[\sum_{k\geq 0}f(nd-S_k)\Big]=d\mu^{-1}\sum_{j\in\mathbb{Z}}f(jd).$$ Thus, the statement of Lemma \ref{lem:gen_shot_noice_process} could have been derived from these two results. However, we prefer to give an economical proof which does not require distinguishing the two cases (nonarithmetic vs arithmetic). Neither does it use the reduction to a standard random walk formed by strictly ascending ladder heights, that was exploited in both \cite{Athreya+McDonald+Ney:1978} and \cite{Iksanov+Polotsky:2016}.
\begin{proof}[Proof of Lemma \ref{lem:gen_shot_noice_process}]
We can investigate the sums generated by $f^+$ and $f^-$ separately. Hence, we assume that $f$ is nonnegative.

The assumption $\me [\xi]\in (0,\infty)$ entails transience of $(S_k)_{k\geq 0}$, which particularly guarantees that, for each $s>0$, $\me\Big[\sum_{k\geq 0}\1_{\{S_k\in (-s,\, s)\}}\Big]<\infty$. For $t\in\mr$ and $s>0$, put $$\nu:=\inf\{k\geq 0: S_k\in (t,\, t+s]\}.$$ The stopping time $\nu$ may be infinite, in which case $\sum_{k\geq 0}\1_{\{S_k\in (t,\, t+s]\}}=0$. Further,
\begin{multline*} 
\sum_{k\geq 0}\1_{\{S_k\in (t,\, t+s]\}}=\1_{\{\nu<\infty\}}\sum_{k\geq 0}\1_{\{S_{\nu+k}\in (t,\, t+s]\}}=\1_{\{\nu<\infty\}}\sum_{k\geq 0}\1_{\{S_{\nu+k}-S_{\nu}\in (t-S_{\nu},\, t+s-S_{\nu}]\}}\\\leq \1_{\{\nu<\infty\}} \sum_{k\geq 0}\1_{\{S_{\nu+k}-S_{\nu}\in (-s,\, s)\}}\quad \text{a.s.}
\end{multline*}
On the event $\{\nu<\infty\}$, $(S_{\nu+k}-S_{\nu})_{k\geq 1}$ has the same distribution as $(S_k)_{k\geq 1}$. Hence, passing to the expectations we infer
\begin{equation}\label{eq:finiteness}
\me\Big[\sum_{k\geq 0}\1_{\{S_k\in (t,\, t+s]\}}\Big]\leq \mmp\{\nu<\infty\}\me\Big[\sum_{k\geq 0}\1_{\{S_k\in (-s,\, s)\}}\Big]\leq \me\Big[\sum_{k\geq 0}\1_{\{S_k\in (-s,\, s)\}}\Big].
\end{equation}

Finally, put $\hat f(t):= \sum_{n\in\mathbb{Z}}\big(\sup_{y\in [n-1,\,n)}\, f(y)\big) \1_{[n-1,\, n)}(t)$ for $t\in\mr$ and observe that $f(t)\leq \hat f(t)$ for $t\in\mr$. Then
\begin{multline*}
\me\Big[\sum_{k\geq 0} f(t - S_k)\Big]\leq \me\Big[\sum_{k\geq 0}\hat f(t-S_k)\Big]=\sum_{
n\in\mathbb{Z}}\big(\sup_{y\in [n-1,\,n)}\,f(y)\big)\me\Big[\sum_{k\geq 0}\1_{\{S_k\in (t-n,\,t-n+1] \}}\Big]\\\leq \bar \sigma(1)\me\Big[\sum_{k\geq 0}\1_{\{S_k\in (-1,\,1)\}}\Big]<\infty.
\end{multline*}
Here, the first equality is justified by Fubini's theorem, and the second inequality is a consequence of \eqref{eq:finiteness}. The proof of Lemma \ref{lem:gen_shot_noice_process} is complete.
\end{proof}
\begin{lemma}\label{lem:intsum}
Let $\rho\geq 1$ and $f:\mr\to [0,\infty)$ be a locally bounded function. Then
$$\me \Big[\Big(\sum_{k\geq 0}f(t-S_k)\1_{\{0<S_k\leq t\}}\Big)^\rho\Big]\leq \Big(\sum_{j=0}^{\lfloor t\rfloor}\sup_{y\in [j,\,j+1)}\,f(y)\Big)^\rho \me\Big[\Big(\sum_{k\geq 0}\1_{\{S_k\in (-1,\,1)\}}\Big)^\rho\Big].$$
\end{lemma}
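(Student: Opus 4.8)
The idea is to bound the random shot-noise sum pointwise by a product of a deterministic quantity and a random sum of indicators over the strip $(-1,1)$, and then take $\rho$-th moments. First I would replace $f$ by the step majorant $\hat f(y):=\sum_{j\in\mathbb{Z}}\bigl(\sup_{z\in[j,\,j+1)}f(z)\bigr)\1_{[j,\,j+1)}(y)$, which is legitimate since $f$ is locally bounded, so each supremum is finite, and $f\le\hat f$ everywhere; monotonicity of $x\mapsto x^\rho$ on $[0,\infty)$ then reduces the claim to $\hat f$. On the event where $S_k\in(0,t]$ the argument $t-S_k$ lies in $[0,t)$, hence in exactly one interval $[j,\,j+1)$ with $0\le j\le\lfloor t\rfloor$, and there $\hat f(t-S_k)\le \sup_{y\in[j,\,j+1)}f(y)\le\sum_{j=0}^{\lfloor t\rfloor}\sup_{y\in[j,\,j+1)}f(y)=:c_t$.

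With this uniform bound in hand, the pointwise estimate
$$\sum_{k\ge0}\hat f(t-S_k)\1_{\{0<S_k\le t\}}\le c_t\sum_{k\ge0}\1_{\{0<S_k\le t\}}\quad\text{a.s.}$$
is almost what I want, except the indicator sum on the right runs over the whole strip $(0,t]$ rather than $(-1,1)$. To fix this I would instead keep the finer bookkeeping: writing $m:=\lfloor t\rfloor$,
$$\sum_{k\ge0}\hat f(t-S_k)\1_{\{0<S_k\le t\}}\le\sum_{j=0}^{m}\Bigl(\sup_{y\in[j,\,j+1)}f(y)\Bigr)\sum_{k\ge0}\1_{\{t-S_k\in[j,\,j+1)\}}\le c_t\,\max_{0\le j\le m}\sum_{k\ge0}\1_{\{S_k\in(t-j-1,\,t-j]\}},$$
and each inner sum is dominated (after the first-entrance stopping-time argument used in the proof of Lemma~\ref{lem:gen_shot_noice_process}) by a copy of $\sum_{k\ge0}\1_{\{S_k\in(-1,1)\}}$, but to make this clean I would simply apply the strong Markov property at $\nu_j:=\inf\{k\ge0:S_k\in(t-j-1,\,t-j]\}$ to get, for each fixed $j$, the a.s.\ bound $\sum_{k\ge0}\1_{\{S_k\in(t-j-1,\,t-j]\}}\le\1_{\{\nu_j<\infty\}}\sum_{k\ge0}\1_{\{S_{\nu_j+k}-S_{\nu_j}\in(-1,1)\}}$, where the post-$\nu_j$ sum is distributed as $G:=\sum_{k\ge0}\1_{\{S_k\in(-1,1)\}}$. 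Summing over $j$ rather than taking the maximum gives the cleaner pointwise inequality $\sum_{k\ge0}\hat f(t-S_k)\1_{\{0<S_k\le t\}}\le c_t\,G'$ where $G'$ has the same distribution as $G$ (on a suitably enlarged space); but since the lemma's right-hand side has $c_t^\rho\,\me[G^\rho]$ with the \emph{same} $G$, the distributional identity $G'\overset{d}{=}G$ is exactly what is needed after raising to the power $\rho$ and taking expectations.

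The last step is then immediate: raise the pointwise bound to the power $\rho\ge1$, use monotonicity, and take expectations, obtaining
$$\me\Bigl[\Bigl(\sum_{k\ge0}f(t-S_k)\1_{\{0<S_k\le t\}}\Bigr)^{\!\rho}\Bigr]\le c_t^{\rho}\,\me\Bigl[\Bigl(\sum_{k\ge0}\1_{\{S_k\in(-1,1)\}}\Bigr)^{\!\rho}\Bigr],$$
which is the assertion. I expect the only genuinely delicate point to be the reduction of each strip sum $\sum_k\1_{\{S_k\in(t-j-1,\,t-j]\}}$ to a distributional copy of $G$: one must be careful that the first-entrance time $\nu_j$ may be infinite (in which case the sum is zero, so the bound holds trivially), and that the strong Markov property is being invoked at a stopping time whose definition depends on the fixed parameters $t$ and $j$ but not on the future. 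There is no issue of finiteness of $\me[G^\rho]$ to address here, since the statement is an inequality that is vacuous unless the right-hand side is finite. Everything else is routine majorization.
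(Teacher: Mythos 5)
Your setup --- the step majorant $\hat f$, the decomposition into unit strips, and the strong-Markov domination of each strip count $a_j:=\sum_{k\ge0}\1_{\{S_k\in(t-j-1,\,t-j]\}}$ by a distributional copy of $G:=\sum_{k\ge0}\1_{\{S_k\in(-1,1)\}}$ --- matches the paper exactly. The gap is in the final combination step. Writing $w_j:=\sup_{y\in[j,\,j+1)}f(y)$, what the strong Markov property gives you is $\sum_j w_j a_j \le \sum_j w_j\,\1_{\{\nu_j<\infty\}}G_j$, where each $\1_{\{\nu_j<\infty\}}G_j$ is stochastically dominated by $G$. Your claimed pointwise inequality $\sum_j w_j a_j\le c_t\,G'$ with $G'\overset{d}{=}G$ does not follow from this: the $G_j$ are mutually dependent (built from overlapping stretches of the same walk), and a convex combination $\sum_j (w_j/c_t)\1_{\{\nu_j<\infty\}}G_j$ of dependent random variables each stochastically $\le G$ need not be stochastically $\le G$. (Toy counterexample to the general principle: $G$ uniform on $\{0,1\}$, $X_1=G$, $X_2=1-G$; then $(X_1+X_2)/2\equiv 1/2$, which is not stochastically $\le G$.) The ``max'' variant fares no better, since the index achieving the maximum is random and the maximum of dependent $G$-copies is not a $G$-copy.

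The ingredient that closes the argument is convexity of $x\mapsto x^\rho$ applied \emph{before} taking expectations. Since the weights $w_j/c_t$ sum to one, Jensen's inequality gives $\bigl(\sum_j w_j a_j\bigr)^{\rho}\le c_t^{\rho}\sum_j (w_j/c_t)\,a_j^{\rho}$; taking expectations and then using the strip-wise bound $\me[a_j^{\rho}]\le\me[G^{\rho}]$ (for each fixed $j$, via the stopping time $\nu_j$) yields the lemma. This is precisely the route the paper takes. Your bookkeeping up to that point is identical; the missing idea is the Jensen step, which replaces the unjustified pathwise coupling by a legitimate moment inequality.
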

\begin{proof}
We start with $$f(t)\leq \sum_{n=0}^{\lfloor t\rfloor}\sup_{y\in[n,\,n+1)}\,f(y)\1_{[n,\,n+1)}(t),\quad t\geq 0.$$ Using this inequality and convexity of $x\mapsto x^\rho$ we obtain
\begin{multline}
\Big(\sum_{k\geq 0}f(t-S_k)\1_{\{0<S_k\leq t\}}\Big)^\rho \leq \Big(\sum_{n=0}^{\lfloor t\rfloor}\big(\sup_{y\in [n,\,n+1)}\,f(y)\big)\sum_{k\geq 0}\1_{\{S_k\in (t-(n+1),\,t-n]\}}\Big)^\rho\\\leq \Big(\sum_{j=0}^{\lfloor t\rfloor}\sup_{y\in [j,\,j+1)}\,f(y)\Big)^\rho \sum_{n=0}^{\lfloor t\rfloor}\frac{\sup_{y\in [n,\,n+1)}\,f(y)}{\sum_{j=0}^{\lfloor t\rfloor}\sup_{y\in [j,\,j+1)}\,f(y)}\Big(\sum_{k\geq 0}\1_{\{S_k\in (t-(n+1),\,t-n]\}}\Big)^\rho.\label{eq:estim}
\end{multline}

Now we prove that
\begin{equation}\label{eq:mom2}
\me \Big[\Big(\sum_{k\geq 0}\1_{\{S_k\in (-1,\,1)\}}\Big)^\rho\Big]<\infty.
\end{equation}
This holds trivially if $(S_k)_{k\geq 0}$ is a $d$-arithmetic random walk with $d\geq 1$. Assume that $(S_k)_{k\geq 0}$ is either nonarithmetic or $d$-arithmetic with $d\in (0,1)$. Put $\nu_0:=\inf\{k\geq 1: S_k\in (-1,1)\}$ if $S_k\in (-1,\,1)$ for some $k\in\mn$ and $\nu_0:=+\infty$ otherwise. The random variable $\sum_{k\geq 0}\1_{\{S_k\in (-1,\,1)\}}$ is stochastically dominated by $Q$, a random variable with a geometric distribution with success probability $p:=\mmp\{\nu_0=
\infty|S_0\in (-1,1)\}$, that is, $\mmp\{Q=k\}=(1-p)^{k-1}p
$ for $k\in\mn$. This entails \eqref{eq:mom2}.

Now analogously to \eqref{eq:finiteness} we conclude that $$\me\Big[\Big(\sum_{k\geq 0}\1_{\{S_k\in (t-(n+1),\, t-n]\}}\Big)^\rho\Big]\leq \me\Big[\Big(\sum_{k\geq 0}\1_{\{S_k\in (-1,\, 1)\}}\Big)^\rho\Big]$$ and the claim of the lemma follows upon passing to the expectations in \eqref{eq:estim}.
\end{proof}

Let $(R_k)_{k\geq 0}$ be a not necessarily monotone sequence of nonnegative random variables. Put $$M(t):=\sum_{k\geq 0}\1_{\{R_k\leq t\}},\quad t\geq 0$$ and assume that, for each $t\geq 0$, $M(t)<\infty$ a.s. Now we state a very particular version of Theorem 1.1 and Remark 1.1 in \cite{Iksanov+Rashytov:2020}.
\begin{lemma}\label{lem:Iks}
Let $h\in D$ be a nondecreasing function satisfying $\lim_{t\to\infty}h(t)=a\in (0,\infty)$. Assume that, for some positive constants $b$ and $c$, $$\Big(\frac{M(ut)-b^{-1}ut}{c\,t^{1/2}}\Big)_{u\geq 0}~\Longrightarrow~ (B(u))_{u\geq 0},\quad t\to\infty$$ in the $J_1$-topology on $D$, where $(B(u))$ is a standard Brownian motion. Then $$\frac{\sum_{k\geq 0}h(ut-R_k)\1_{\{R_k\leq ut\}}-b^{-1}\int_0^{ut}h(y){\rm d}y}{ac\,t^{1/2}}~\Longrightarrow~(B(u))_{u\geq 0},\quad t\to\infty$$ in the $J_1$-topology on $D$.
\end{lemma}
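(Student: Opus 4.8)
The plan is to reduce the assertion to the assumed functional limit theorem for $M$ by writing the shot-noise sum as a continuous superposition of shifted copies of $M$. We may assume $h\geq 0$, since $h$ can be negative only on a bounded set (it is nondecreasing with $h\uparrow a>0$), and the negative part contributes an $O(1)$ correction to both the sum and the centering integral. Put $h^{-1}(v):=\inf\{x\geq 0:h(x)>v\}$, finite for $v\in(0,a)$, and use the layer-cake identity $h(x)=\int_0^a\1_{\{h^{-1}(v)\leq x\}}\,{\rm d}v$, valid for $x\geq 0$ because $0\leq h\leq a$. Substituting $x=ut-R_k$ and summing over $k$ on $\{R_k\leq ut\}$ — the factor $\1_{\{R_k\leq ut-h^{-1}(v)\}}$ already forces $R_k\leq ut$ since $h^{-1}(v)\geq 0$ — gives, with the convention $M(x):=0$ for $x<0$,
\begin{equation*}
\sum_{k\geq 0}h(ut-R_k)\1_{\{R_k\leq ut\}}=\int_0^a M(ut-h^{-1}(v))\,{\rm d}v,
\end{equation*}
and the same computation produces $b^{-1}\int_0^{ut}h(y)\,{\rm d}y=\int_0^a b^{-1}(ut-h^{-1}(v))^+\,{\rm d}v$. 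Hence the process $u\mapsto\big(\sum_{k\geq0}h(ut-R_k)\1_{\{R_k\leq ut\}}-b^{-1}\int_0^{ut}h(y)\,{\rm d}y\big)/(ac\,t^{1/2})$ equals
\begin{equation*}
Y_t(u)=a^{-1}\int_0^a X_t^{(v)}(u)\,{\rm d}v,\qquad X_t^{(v)}(u):=\frac{M(ut-h^{-1}(v))-b^{-1}(ut-h^{-1}(v))^+}{c\,t^{1/2}},
\end{equation*}
so it remains to prove $Y_t\Longrightarrow(B(u))_{u\geq 0}$ in the $J_1$-topology on $D$.

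Write $X_t(u):=(M(ut)-b^{-1}ut)/(c\,t^{1/2})$, so $X_t\Longrightarrow B$ in $D$ by hypothesis. The key point is that $X_t^{(v)}$ is asymptotically indistinguishable from $X_t$, uniformly over $u$ in compacts and uniformly over $v$ in compact subintervals of $[0,a)$. Indeed, for $ut\geq h^{-1}(v)$ one computes $X_t^{(v)}(u)-X_t(u)=-\big(X_t(u)-X_t(u-h^{-1}(v)/t)\big)$, an increment of $X_t$ over a $u$-lag $h^{-1}(v)/t\to 0$, whereas for $ut<h^{-1}(v)$ — that is, for $u$ in the shrinking interval $[0,h^{-1}(v)/t)$ — one has $X_t^{(v)}(u)=0$ and hence $X_t^{(v)}(u)-X_t(u)=-X_t(u)$. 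Since $X_t\Longrightarrow B$ with $B$ a.s.\ continuous and $B(0)=0$, $J_1$-convergence entails asymptotic equicontinuity of $X_t$ and $\sup_{0\leq u\leq\delta_t}|X_t(u)|\to 0$ in probability whenever $\delta_t\to 0$; combining these, $\sup_{0\leq u\leq U}|X_t^{(v)}(u)-X_t(u)|\to 0$ in probability for each $v<a$, and uniformly for $v\in[0,a-\varepsilon]$ since there $h^{-1}(v)\leq h^{-1}(a-\varepsilon)<\infty$.

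To finish, decompose $Y_t=X_t+a^{-1}\int_0^{a-\varepsilon}(X_t^{(v)}-X_t)\,{\rm d}v+a^{-1}\int_{a-\varepsilon}^a(X_t^{(v)}-X_t)\,{\rm d}v$. The middle term vanishes uniformly on $[0,U]$ in probability by the uniform-in-$v$ bound just obtained. For the last term, monotonicity of $M$ gives $\sup_{v<a}|X_t^{(v)}(u)|\leq\sup_{0\leq u'\leq U}|X_t(u')|$ for $u\leq U$, so its sup-norm over $[0,U]$ is at most $(2\varepsilon/a)\sup_{0\leq u\leq U}|X_t(u)|$, which is $(2\varepsilon/a)\,O_{\mmp}(1)$ because $\sup_{0\leq u\leq U}|X_t(u)|$ converges in distribution (continuous mapping theorem for the supremum functional) and so is tight. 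Letting $t\to\infty$ and then $\varepsilon\downarrow 0$ yields $\sup_{0\leq u\leq U}|Y_t(u)-X_t(u)|\to 0$ in probability for every $U>0$, whence $Y_t\Longrightarrow B$ in the $J_1$-topology on $D$; as $c\,t^{1/2}$ together with the factor $a$ is precisely $ac\,t^{1/2}$, this is the assertion of the lemma.

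The step I expect to be the main obstacle is the passage from convergence of the integrands $X_t^{(v)}$ to convergence of the integral $Y_t=a^{-1}\int_0^a X_t^{(v)}\,{\rm d}v$: the $X_t^{(v)}$ all converge to $B$ but are highly dependent — they are time-shifts of the single process $M$ — which is exactly why their average converges to $B$ itself rather than to a Gaussian process with a smaller variance, and one must rule out any loss of mass both in the bulk of the $v$-integration and near its right endpoint $a$, where $h^{-1}(v)$ may be arbitrarily large. Both difficulties are handled by soft properties of the hypothesis — asymptotic equicontinuity of $X_t$ for the bulk and tightness of $\sup_{0\leq u\leq U}|X_t(u)|$ for the endpoint — so no moment condition beyond the assumed one is required.
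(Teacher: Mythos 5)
The paper itself does not prove this lemma; it is quoted verbatim as ``a very particular version of Theorem 1.1 and Remark 1.1 in \cite{Iksanov+Rashytov:2020}'', so there is no in-paper argument to compare against. Your proof is a self-contained argument and its main line is correct: the layer-cake representation $\sum_{k\geq 0}h(ut-R_k)\1_{\{R_k\leq ut\}}=\int_0^a M(ut-h^{-1}(v))\,{\rm d}v$ (and the matching identity for the centering) turns the claim into showing that the $v$-average of the time-shifted, centered-and-scaled processes $X_t^{(v)}$ is $\sup$-close to $X_t$ on compacts, and you handle the bulk $v\in[0,a-\varepsilon]$ by the shrinking-lag modulus of continuity that $C$-tightness gives (together with $\sup_{u\le\delta_t}|X_t(u)|\to 0$ to cover the boundary region $ut<h^{-1}(v)$), and the endpoint sliver $v\in(a-\varepsilon,a]$ by tightness of $\sup_{[0,U]}|X_t|$. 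That all works, and it is a clean and arguably more elementary route than invoking the cited general shot-noise FLT.

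One sentence overstates a bound and should be repaired. In the reduction to $h\geq 0$ you write that ``the negative part contributes an $O(1)$ correction to both the sum and the centering integral''. For the centering integral this is fine: $b^{-1}\int_0^{ut}h^-(y)\,{\rm d}y\leq b^{-1}\int_0^{x_0}h^-<\infty$ is a deterministic constant ($x_0$ the right endpoint of $\{h<0\}$). But the sum correction $\sum_{k\geq 0}h^-(ut-R_k)\1_{\{R_k\leq ut\}}\leq |h(0)|\big(M(ut)-M(ut-x_0)\big)$ is random and in general not $O(1)$, nor even $O_{\mmp}(1)$ uniformly in $u\in[0,U]$: it is an increment of $M$ over an interval of fixed length $x_0$, and under the hypothesis it is only $o_{\mmp}(t^{1/2})$ uniformly on compacts, since $t^{-1/2}\big(M(ut)-M(ut-x_0)\big)= c\big(X_t(u)-X_t(u-x_0/t)\big)+b^{-1}x_0 t^{-1/2}$. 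That bound is exactly what is needed and is proved by the same asymptotic-equicontinuity-over-shrinking-lag device you already use in the bulk estimate, so the gap is cosmetic; but as written the $O(1)$ claim is false and should be replaced by the $o_{\mmp}(t^{1/2})$ statement with the one-line justification above.
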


\begin{proof}[Proof of Theorem \ref{thm5}]
Recall the notation $F(t)=\mmp\{\eta\leq t\}$ for $t\in\mr$. Here is a basic decomposition for what follows:
\begin{multline*}
N(t)-\mu^{-1}\int_0^t F(y){\rm d}y=\sum_{k\geq 0}\1_{\{S_k+\eta_{k+1}\leq t,\,S_k\leq 0\}}+\sum_{k\geq 0}\1_{\{S_k+\eta_{k+1}\leq t,\,S_k>t\}}\\+\sum_{k\geq 0}\big(\1_{\{S_k+\eta_{k+1}\leq t\}}-F(t-S_k)\big)\1_{\{0<S_k\leq t\}}+\Big(\sum_{k\geq 0}F(t-S_k)\1_{\{0<S_k\leq t\}}-\mu^{-1}\int_0^t F(y){\rm d}y\Big)\\=:\sum_{r=1}^4 I_r(t).
\end{multline*}
The third term is a `martingale term' (the terminal value of a martingale), and the fourth term is a centered `shot noise term', whose asymptotic behavior is driven by Lemma \ref{lem:Iks}. We shall show that the fourth term gives a principal contribution, whereas all the other terms vanish in the limit.

\noindent {\sc Analysis of $I_1$.} Plainly, for all $T>0$, $$0\leq t^{-1/2}\sup_{u\in [0,\,T]}\, I_1(ut)\leq t^{-1/2}N^\ast(0)~\to~0,\quad t\to\infty\quad\text{a.s.}$$ Here, we have used $N^\ast(0)<\infty$ a.s. which is secured by $\lim_{n\to\infty}S_n=+\infty$ a.s.

\noindent {\sc Analysis of $I_2$.}
We first show that
\begin{equation}\label{eq:momx}
\me [(I_2(t))^2]=O(1),\quad t\to\infty.
\end{equation}
To this end, write, for $t\in\mr$, $$I_2(t)=\sum_{k\geq 0}\big(\1_{\{S_k+\eta_{k+1}\leq t\}}-F(t-S_k)\big)\1_{\{S_k>t\}}+\sum_{k\geq 0}F(t-S_k)\1_{\{S_k>t\}}=:I_{21}(t)+I_{22}(t)$$ and observe that $\me [(I_2(t))^2]\leq 2(\me [(I_{21}(t))^2]+\me [(I_{22}(t))^2])$. Put $$m(t):=\me [I_{22}(t)]=\me \Big[\sum_{k\geq 0}F(t-S_k)\1_{\{S_k>t\}}\Big],\quad t\in\mr.$$ Since the function $t\mapsto F(t)$ is nondecreasing and $\int_{-\infty}^0 F(t){\rm d}t = \me [\eta^-] < \infty$, it is dRi on $(-\infty, 0]$. This follows, for instance, from Lemma 6.2.1 on p.~213 in \cite{Iksanov:2016} applied to $t\mapsto F(-t)$. As a consequence, the function $t\mapsto F(t)\1_{(-\infty, 0)}(t)$ is dRi on $\mr$. Hence, by Lemma \ref{lem:gen_shot_noice_process}, $m(t)\leq c$ for some constant $c>0$ and all $t\in\mr$.

We claim that
\begin{equation}\label{eq:formula}
\me [(I_{21}(t))^2]=\me\Big[\sum_{k\geq 0}F(t-S_k)(1-F(t-S_k))\1_{\{S_k>t\}}\Big],\quad t\in\mr.
\end{equation}
To prove this, write
\begin{multline*}
\me [(I_{21}(t))^2]=
\sum_{k\geq 0}\me \big[\big(\1_{\{S_k+\eta_{k+1}\leq t\}}-F(t-S_k)\big)^2\1_{\{S_k>t\}}\big]\\+2\,\me\Big[\sum_{0\leq i<j}\big(\1_{\{S_i+\eta_{i+1}\leq t\}}-F(t-S_i)\big)\1_{\{S_i>t\}}\big(\1_{\{S_j+\eta_{j+1}\leq t\}}-F(t-S_j)\big)\1_{\{S_j>t\}}\Big].
\end{multline*}
For $\ell\in\mn$, let $\mathcal{G}_\ell$ denote the $\sigma$-algebra generated by $(\xi_k,\eta_k)_{1\leq k\leq \ell}$ and $\mathcal{G}_0$ denote the trivial $\sigma$-algebra. Then, for $k\in\mn_0$, $$\me \big[\big(\1_{\{S_k+\eta_{k+1}\leq t\}}-F(t-S_k)\big)^2\1_{\{S_k>t\}}\big|\mathcal{G}_k\big]=F(t-S_k)(1-F(t-S_k))\1_{\{S_k>t\}}\quad\text{a.s.}$$ and, for $0\leq i<j$, $$\me \big[\big(\1_{\{S_i+\eta_{i+1}\leq t\}}-F(t-S_i)\big)\1_{\{S_i>t\}}\big(\1_{\{S_j+\eta_{j+1}\leq t\}}-F(t-S_j)\big)\1_{\{S_j>t\}}\big|\mathcal{G}_j\big]=0\quad\text{a.s.}$$ because the variable $\big(\1_{\{S_i+\eta_{i+1}\leq t\}}-F(t-S_i)\big)\1_{\{S_i>t\}}$ is $\mathcal{G}_j$-measurable and $$\me\big[ \big(\1_{\{S_j+\eta_{j+1}\leq t\}}-F(t-S_j)\big)\1_{\{S_j>t\}}\big|\mathcal{G}_j\big]=0\quad\text{a.s.}$$ Formula \eqref{eq:formula} follows from these facts and entails, for $t\in\mr$, $$\me [(I_{21}(t))^2] 
\leq \me\Big[\sum_{k\geq 0}F(t-S_k)\1_{\{S_k>t\}}\Big]\leq c.$$ Further, for $t\in\mr$, $$\me [(I_{22}(t))^2]=\me\Big[\sum_{k\geq 0}(F(t-S_k))^2\1_{\{S_k>t\}}\Big]+2\,\me\Big[\sum_{0\leq i<j}F(t-S_i)F(t-S_j)\1_{\{S_i>t\}}\1_{\{S_j>t\}}\Big].$$ The first term on the right-hand side is bounded from above by $c$. The second does not exceed
$$2\me\Big[\sum_{k\geq 0}F(t-S_k)m(t-S_k)\1_{\{S_k>t\}}\Big]\leq 2c m(t)\leq 2c^2,\quad t\in\mr.$$ Thus, \eqref{eq:momx} has been proved.

Pick $\delta\in (1,2)$. Given $t>0$ there exists $n\in\mn$ such that $t\in (n^\delta, (n+1)^\delta]$. Using monotonicity we obtain
\begin{multline}
0\leq t^{-1/2}I_2(t)\leq n^{-\delta/2} \sum_{k\geq 0}\1_{\{S_k+\eta_{k+1}\leq (n+1)^\delta,\,S_k>n^\delta\}}\\\leq n^{-\delta/2}I_2((n+1)^\delta)+n^{-\delta/2}\sum_{k\geq 0}\1_{\{S_k\in (n^\delta,\,(n+1)^\delta]\}}.\label{eq:inter100}
\end{multline}
In view of \eqref{eq:momx} and Markov's inequality, for all $\varepsilon>0$, $\mmp\{I_2((n+1)^\delta)>\varepsilon n^{\delta/2}\}=O(n^{-\delta})$ as $n\to\infty$. Hence, $\sum_{n\geq 1}\mmp\{I_2((n+1)^\delta)>\varepsilon n^{\delta/2}\}<\infty$ and, by the direct part of the Borel-Cantelli lemma, $\lim_{n\to\infty}n^{-\delta/2}I_2((n+1)^\delta)=0$ a.s.

Next, we intend to check that the second term on the right-hand side of \eqref{eq:inter100} converges to $0$ a.s., too. It is shown in the proof of Lemma \ref{lem:intsum} that, for all $r>1$, $\me [(\sum_{k\geq 0}\1_{\{S_k\in (-1,\,1)\}})^r]<\infty$. Choose minimal $r>1$ satisfying $r(2-\delta)>2$. For simplicity of presentation, assume that $n^\delta$ and $(n+1)^\delta$ are integer and put $a_n:=(n+1)^\delta-n^\delta$. Similarly to \eqref{eq:finiteness} we infer with the help of Minkowski's inequality that
\begin{multline*}
\me \Big[\Big(\sum_{k\geq 0}\1_{\{S_k\in (n^\delta,\,(n+1)^\delta]\}}\Big)^r\Big]=\me \Big[\Big(\sum_{j=n^\delta}^{(n+1)^\delta-1}\sum_{k\geq 0}\1_{\{S_k\in (j,\, j+1]\}}\Big)^r \Big]\\\leq (a_n)^r \me\Big[\Big(\sum_{k\geq 0}\1_{\{S_k\in (-1,\,1)\}}\Big)^r\Big].
\end{multline*}
Using $(a_n)^r n^{-r\delta/2}=O(n^{-r(2-\delta)/2})$, Markov's inequality and the direct part of the Borel-Cantelli lemma we conclude that
\begin{multline*}
\sum_{n\geq 1}\mmp\Big\{\sum_{k\geq 0}\1_{\{S_k\in (n^\delta,\,(n+1)^\delta]\}}>\varepsilon n^{\delta/2}\Big\}\leq \sum_{n\geq 1}\frac{(a_n)^r\me [(\sum_{k\geq 0}\1_{\{S_k\in(-1,\,1)\}})^r]}{\varepsilon^r n^{r\delta/2}}\\=\sum_{n\geq 1}O(n^{-r(2-\delta)/2})<\infty.
\end{multline*}
Thus, $\lim_{n\to\infty} n^{-\delta/2}\sum_{k\geq 0}\1_{\{S_k\in (n^\delta,\,(n+1)^\delta]\}}=0$ a.s. and thereupon, recalling \eqref{eq:inter100}, $$\lim_{t\to\infty} t^{-1/2} I_2(t)=0\quad\text{a.s.}$$ This entails, for all $T>0$, $\lim_{t\to\infty} t^{-1/2}\sup_{u\in [0,\,T]}I_2(ut)=0$ a.s., thereby showing that the contribution of $I_2$ is negligible.

\noindent {\sc Analysis of $I_3$.} It suffices to prove that, for all $T>0$, $t^{-1/2}\sup_{u\in [0,\,T]}\,|I_3(ut)|\overset{\mmp}{\to} 0$ as $t\to\infty$ (we note in passing that the latter limit relation does not necessarily hold, with the a.s.\ convergence replacing the convergence in probability, see Remark \ref{rem:as} for more details). Plainly, considering $T=1$ does the job.

Put $\kappa=\kappa(t):=\lfloor (3/4) \log_2 t\rfloor$ for $t\geq 1$. For $j\in\mn_0$ and $t>0$, put $$\mathcal{V}_j
(t):=\{v_{j,m}(t):=2^{-j}m t:0\leq m\leq 2^j, m\in\mn_0\}.$$ In what follows, we write $v_{j,\,m}$ for $v_{j,\,m}(t)$. Observe that $\mathcal{V}_j
(t)\subseteq \mathcal{V}_{j+1}(t)$. For any $u\in [0, t]$, put
$$
u_j:=\max\{v\in \mathcal{V}_j
(t): v\leq u\}=2^{-j}t\lfloor 2^j t^{-1} u\rfloor.
$$
Observe that either $u_{j-1}=u_j$ or $u_{j-1}=u_j-2^{-j}t$. Necessarily, $u_j=v_{j,\,m}$ for some $0\leq m\leq 2^j$, so that either $u_{j-1}=v_{j,\,m}$ or $u_{j-1}=v_{j,\,m-1}$. Write
\begin{multline*}
\sup_{u\in [0,\, t]}|I_3(u)|=\max_{0\leq j\leq 2^\kappa-1}\sup_{z\in [0,\,v_{\kappa,\,j+1}-v_{\kappa,\,j}]}|I_3(v_{\kappa,\,j})+(I_3(v_{\kappa,\,j}+z)-I_3(v_{\kappa,\,j}))|\\\leq \max_{0\leq j\leq 2^\kappa-1}\,|I_3(v_{\kappa,\,j})|+\max_{0\leq j\leq 2^\kappa-1}\sup_{z\in [0,\,v_{\kappa,\,j+1}-v_{\kappa,\,j}]}\,|I_3(v_{\kappa,\,j}+z)-I_3(v_{\kappa,\,j})|\quad\text{a.s.}
\end{multline*}

For $u\in \mathcal{V}_\kappa(t),$
\begin{equation*}
|I_3(u)| = \Big|\sum_{j=1}^\kappa (I_3(u_j)-I_3(u_{j-1}))+I_3(u_0)\Big|\leq \sum_{j=0}^\kappa \max_{1\leq m\leq 2^j}\,|I_3(v_{j,\,m})-I_3(v_{j,\,m-1})|.
\end{equation*}
Thus,
\begin{multline}
\sup_{u\in [0,\, t]}|I_3(u)|\leq \sum_{j=0}^\kappa \max_{1\leq m\leq 2^j}\,|I_3(v_{j,\,m})-I_3(v_{j,\,m-1})|\\
+\max_{0\leq j\leq 2^\kappa-1}\sup_{z\in [0,\,v_{\kappa,\,j+1}-v_{\kappa,\, j}]}|I_3(v_{\kappa,\,j}+z)-I_3(v_{\kappa,\, j})|\quad\text{a.s.}
\label{eq:inter15}
\end{multline}

We first show that, for all $\varepsilon>0$,
\begin{equation}\label{eq:inter8}
\lim_{t\to\infty}\mmp\Big\{\sum_{j=0}^{\kappa}
\max_{1\leq m\leq 2^j}\,|I_3(v_{j,\,m})-I_3(v_{j,\,m-1})|>\varepsilon t^{1/2}\Big\}=0.
\end{equation}
Let $s\in\mn$. To prove
\eqref{eq:inter8} we have to provide an appropriate upper bound for $\me (I_3(u)-I_3(v))^{2s}$ for $u>v>0$. Observe that $I_3(u)-I_3(v)$ is equal to the a.s.\ limit $\lim_{j\to\infty} R(j,u,v)$, where $(R(j,u,v), \mathcal{G}_j)_{j\geq 0}$ is a martingale defined by $R(0,u,v):= 0$, $$R(j, u,v):=\sum_{k=0}^{j-1}\big((\1_{\{S_k+\eta_{k+1}\leq u\}}-F(u-S_k))\1_{\{0<S_k\leq u\}}-(\1_{\{S_k+\eta_{k+1}\leq v\}}-F(v-S_k))\1_{\{0<S_k\leq v\}}\big),$$ and, as before, $\mathcal{G}_0$ denotes the trivial $\sigma$-algebra and, for $j\in\mn$, $\mathcal{G}_j$ denotes the $\sigma$-algebra generated by $(\xi_k,\eta_k)_{1\leq k\leq j}$. By the Burkholder–Davis–Gundy inequality, see, for instance, Theorem 11.3.2 in \cite{Chow+Teicher:2003},
\begin{multline*}
\me [(I_3(u)-I_3(v))^{2s}]\\\leq C\Big(\me \Big[\Big(\sum_{k\geq 0}\me\big((R(k+1, u,v)-R(k, u,v))^2|\mathcal{G}_k\big)\Big)^s\Big]+\sum_{k\geq 0}\me \big[(R(k+1, u,v)-R(k, u,v))^{2s}\big]\Big)\\=C\Big(\me \Big[\Big(\sum_{k\geq 0}F(u-S_k)(1-F(u-S_k))\1_{\{v<S_k\leq u\}}\\+\sum_{k\geq 0}(F(u-S_k)-F(v-S_k))(1-F(u-S_k)+F(v-S_k))\1_{\{0<S_k\leq v\}}\Big)^s\Big]\\+ \sum_{k\geq 0}\me\big[\big((\1_{\{S_k+\eta_{k+1}\leq u\}}-F(u-S_k))\1_{\{0<S_k\leq u\}}-(\1_{\{S_k+\eta_{k+1}\leq v\}}-F(v-S_k))\1_{\{0<S_k\leq v\}}\big)^{2s}\big]\Big)\\=: C(A(u,v)+B(u,v))
\end{multline*}
for a positive constant $C$. In what follows $C_1$, $C_2,\ldots$ denote positive constants whose values are of no importance. Further,
\begin{multline*}
A(u,v)=\me \Big[\Big(\int_{(v,\,u]}F(u-y)(1-F(u-y)){\rm d}N^\ast(y)\\+\int_{(0,\,v]}(F(u-y)-F(v-y))(1-F(u-y)+F(v-y)){\rm d}N^\ast(y)\Big)^s\Big]\\\leq 2^{s-1}\Big(\me \Big[\Big(\int_{(v,\,u]}F(u-y)(1-F(u-y)){\rm d}N^\ast(y)\Big)^s\Big]\\+\me\Big[\Big(\int_{(0,\,v]}(F(u-y)-F(v-y))(1-F(u-y)+F(v-y)){\rm d}N^\ast(y)\Big)^s\Big]\Big)\\\leq 2^{s-1}\Big(\me\Big[\Big(\int_{(0,\,u]}(1-F(u-y))\1_{[0,\,u-v)}(u-y){\rm d}N^\ast(y)\Big)^s\Big]+\me\Big[\Big(\int_{(0,\,v]}(F(u-y)-F(v-y)){\rm d}N^\ast(y)\Big)^s\Big]\Big)\\=:2^{s-1}(A_1(u,v)+A_2(u,v)).
\end{multline*}
Put $\gamma_s:=\me\Big[\Big(\sum_{k\geq 0}\1_{\{S_k\in (-1,1)\}}\Big)^s\Big]$ and $g(t):=\sum_{n=0}^{\lceil t\rceil}(1-F(n))$ for $t\geq 0$, where $x\mapsto \lceil x\rceil$ is the ceiling function. Using Lemma \ref{lem:intsum} with $t=u$ and $f(y)=(1-F(y))\1_{[0,\,u-v)}(y)$ and then with $t=v$ and $f(y)=F(u-v+y)-F(y)$ we infer $$A_1(u,v)\leq \gamma_s \Big(\sum_{n=0}^{\lfloor u\rfloor}\sup_{y\in [n,\,n+1)}\,(1-F(y))\1_{[0,\,u-v)}(y)\Big)^s\leq \gamma_s (g(u-v))^s$$ and
\begin{multline*}
A_2(u,v)\leq \gamma_s \Big(\sum_{n=0}^{\lfloor v\rfloor}\sup_{y\in [n,\,n+1)}\,(F(u-v+y)-F(y))\Big)^s\leq \gamma_s \Big(\sum_{n=0}^{\lfloor v\rfloor}(F(\lceil u-v\rceil+n+1)-F(n))\Big)^s\\=\gamma_s \Big(\sum_{n=0}^{\lceil u-v\rceil}(F(\lfloor v\rfloor+1+n)-F(n))\Big)^s\leq \gamma_s (g(u-v))^s.
\end{multline*}
Also,
\begin{multline*}
B(u,v)\leq \sum_{k\geq 0}\me\big[\big((\1_{\{v<S_k+\eta_{k+1}\leq u\}}-F(u-S_k)+F(v-S_k))\1_{\{0<S_k\leq v\}}\\+(\1_{\{S_k+\eta_{k+1}\leq u\}}-F(u-S_k))\1_{\{v<S_k\leq u\}}\big)^2\big]\leq 2 \gamma_1 g(u-v)
\leq 2\gamma_1 (g(u-v))^s
\end{multline*}
whenever $g(u-v)\geq 1$, and thereupon
\begin{equation}\label{eq:mom}
\me [(I_3(u)-I_3(v))^{2s}]\leq C_1 (g(u-v))^s
\end{equation}
whenever $g(u-v)\geq 1$.

Observe that $v_{j,\,m}-v_{j,\,m-1}=2^{-j}t$. Given $\delta>0$ $C_1 (g(2^{-j}t))^s \leq \delta 2^{-js}t^s$ for nonnegative integer $j\leq \kappa(t)$ and large $t$. Invoking \eqref{eq:mom} we then obtain, for nonnegative integer $j\leq \kappa(t)$ and large $t$,
\begin{equation}\label{eq:inter17}
\me [(I_3(v_{j,\,m})-I_3(v_{j,\,m-1}))^{2s}]\leq C_1 (g(2^{-j}t))^s\leq \delta 2^{-js} t^s
\end{equation}
and thereupon
\begin{multline*}
\me \big[\max_{1\leq m\leq 2^j}\,(I_3(v_{j,\,m})-I_3(v_{j,\,m-1}))^{2s}\big]\leq \sum_{m=1}^{2^j}\me\big[(I_3(v_{j,\,m})-I_3(v_{j,\,m-1}))^{2s}\big]\leq \delta 2^{-j(s-1)}t^s.
\end{multline*}
By the triangle inequality for the $L_{2s}$-norm, with integer $s\geq 2$,
\begin{multline*}
\me \Big[\Big(\sum_{j=0}^\kappa \max_{1\leq m\leq 2^j}\,|I_3(v_{j,\,m})-I_3(v_{j,\,m-1})|\Big)^{2s}\Big]\leq \Big(\sum_{j=0}^\kappa \big(\me \big[\max_{1\leq m\leq 2^j}\,(I_3(v_{j,\,m})-I_3(v_{j,\,m-1}))^{2s}\big]\big)^{1/(2s)}\Big)^{2s}\\\leq \delta t^s \big(\sum_{j\geq 0}2^{-j(s-1)/(2s)}\big)^{2s}=:C_2\delta t^s.
\end{multline*}
By Markov's inequality, for large $t$, $$\mmp\Big\{\sum_{j=0}^\kappa \max_{1\leq m\leq 2^j}\,|I_3(v_{j,\,m})-I_3(v_{j,\,m-1})|>\varepsilon t^{1/2}\Big\}\leq C_2\delta \varepsilon^{-2s}.$$ Letting $\delta\to 0+$ we arrive at
\eqref{eq:inter8}.

Now we pass to the analysis of the second summand in \eqref{eq:inter15}. Put $M(t):=\int_{(0,\,t]}F(t-y){\rm d}N^\ast(y)$ for $t\geq 0$. Using the equality $I_3(t)=N(t)-M(t)$ and a.s. monotonicity of $N$ and $M$ we infer
\begin{multline*}
\sup_{z\in [0,\,v_{\kappa,\,j+1}-v_{\kappa,\,j}]}|I_3(v_{\kappa,\,j}+z)-I_3(v_{\kappa,\,j})|\\\leq \sup_{z\in [0,\,v_{\kappa,\,j+1}-v_{\kappa,\,j}]}(N(v_{\kappa,\,j}+z)-N(v_{\kappa,\,j}))+\sup_{z\in [0,\,v_{\kappa,\,j+1}-v_{\kappa,\,j}]}\,(M(v_{\kappa,\,j}+z)-M(v_{\kappa,\,j}))\\=(N(v_{\kappa,\,j+1})-N(v_{\kappa,\,j}))+ (M(v_{\kappa,\,j+1})-M(v_{\kappa,\,j})).
\end{multline*}
Observe that
\begin{multline*}
\max_{0\leq j\leq 2^\kappa-1}\,(N(v_{\kappa,\,j+1})-N(v_{\kappa,\,j}))\leq \max_{0\leq j\leq 2^\kappa-1}\,|I_3(v_{\kappa,\,j+1})-I_3(v_{\kappa,\,j})|\\+\max_{0\leq j\leq 2^\kappa-1}\,(M(v_{\kappa,\,j+1})-M(v_{\kappa,\,j})).
\end{multline*}
Hence, it is enough to prove that, for all $\varepsilon>0$,
\begin{equation}\label{eq:inter9}
\lim_{t\to\infty} \mmp\big\{\max_{0\leq j\leq 2^\kappa-1}\,(M(v_{\kappa,\,j+1})-M(v_{\kappa,\,j}))>\varepsilon t^{1/2}\big\}=0
\end{equation}
and
\begin{equation}\label{eq:inter10}
\lim_{t\to\infty} \mmp\big\{\max_{0\leq j\leq 2^\kappa-1}\,|I_3(v_{\kappa,\,j+1})-I_3(v_{\kappa,\,j})|>\varepsilon t^{1/2}\big\}=0.
\end{equation}
Arguing as above we conclude that, for $u>v>0$,
\begin{multline*}
\me [(M(u)-M(v))^s]=\me\Big[\Big(\int_{(v,\,u]}F(u-y){\rm d}N^\ast(y)+\int_{(0,\,v]}(F(u-y)-F(v-y)){\rm d}N^\ast(y)\Big)^s\Big]\\\leq 2^{s-1}\gamma_s (\lceil u-v\rceil+1)^s.
\end{multline*}
As a consequence, for nonnegative integer $j\leq \kappa(t)$ and large $t$, $$\me [(M(v_{\kappa,\,j+1})-M(v_{\kappa,\,j}))^s]
\leq C_3 2^{-\kappa s} t^s.$$ By Markov's inequality and our choice of $\kappa$,
\begin{multline*}
\mmp\big\{\max_{0\leq j\leq 2^\kappa-1}\,(M(v_{\kappa,\,j+1})-M(v_{\kappa,\,j}))>\varepsilon t^{1/2}\big\}\leq C_3\varepsilon^{-s}2^{-\kappa(s-1)}t^{s/2}\leq C_3\varepsilon^{-s} 2^{s-1}t^{3/4-s/4}.
\end{multline*}
Hence, \eqref{eq:inter9} follows upon choosing $s=4$, say. To prove
\eqref{eq:inter10}, we invoke \eqref{eq:inter17} which enables us to conclude that
\begin{equation*}
\mmp\big\{\max_{0\leq j\leq 2^\kappa-1}\,|I_3(v_{\kappa,\,j+1})-I_3(v_{\kappa,\,j})|>\varepsilon t^{1/2}\big\}
\leq C_2\varepsilon^{-2s}\delta^s 2^{-\kappa(s-1)}.
\end{equation*}
Choosing $s=2$ and letting $t\to\infty$ we arrive at
\eqref{eq:inter10}.

\noindent {\sc Analysis of $I_4$.} It is known (see, for instance, Proposition A.1 in \cite{Iksanov+Kondratenko:2021}) that
\begin{equation}\label{eq:N*}
\Big(\frac{N^*(ut) - \mu^{-1}ut}{(\sigma^2\mu^{-3}t)^{1/2}}\Big)_{u\geq0}~\Longrightarrow~ \big(B(u)\big)_{u\geq0},\quad t\to\infty
\end{equation}
in the $J_1$-topology on $D$. An application of Lemma \ref{lem:Iks}, with $h=F$, $M(t)=N^\ast(t)-N^\ast(0)$, $a=1$, $b=\mu$, $c=\sigma \mu^{-3/2}$, then yields $$\Big(\frac{\sum_{k\geq 0}F(ut-S_k)\1_{\{0<S_k\leq ut\}}-\mu^{-1}\int_0^{ut}F(y){\rm d}y}{(\sigma^2 \mu^{-3}t)^{1/2}}\Big)_{u\geq 0}~\Longrightarrow~(B(u))_{u\geq 0},\quad t\to\infty$$ in the $J_1$-topology on $D$.

The proof of Theorem \ref{thm5} is complete.
\end{proof}

\begin{rem}\label{rem:as}
As was announced in the proof of Theorem \ref{thm5}, we explain here that the relation $\lim_{t\to\infty} t^{-1/2}\sup_{u\in [0,\,1]}\,|I_3(ut)|=0$ a.s.\ may fail to hold. Assume that $\mmp\{\xi=c\}=1$ for some $c>0$ and $1-F(t)=\mmp\{\eta>t\}\sim (\log\log t)^{-1/2}$ as $t\to\infty$. It can be checked, with some efforts, that Theorem 1.6 in \cite{Buraczewski+Iksanov+Kotelnikova:2025} applies. By that theorem, $$\limsup_{t\to\infty}\frac{I_3(t)}{t^{1/2}(\log\log t)^{1/4}}=\Big(\frac{2}{c}\Big)^{1/2}\quad\text{a.s.}$$ and thereupon $\lim_{t\to\infty} t^{-1/2}\sup_{u\in [0,\,1]}\,|I_3(ut)|=\infty$ a.s.
\end{rem}

\noindent \textbf{Acknowledgment.} We thank the two anonymous referees for numerous very useful comments which helped to significantly improve the presentation and correct several our blunders. The work of A. Iksanov was supported by the National Research Foundation of Ukraine (project 2023.03/0059 ‘Contribution to modern theory of random series’).

\end{document}